\newtheorem{lemma}{Lemma}[section]
\newtheorem{theorem}[lemma]{Theorem}
\newtheorem{proposition}[lemma]{Proposition}
\newtheorem{corollary}[lemma]{Corollary}
\theoremstyle{definition}
\newtheorem{definition}[lemma]{Definition}
\theoremstyle{remark}
\newtheorem{remark}[lemma]{Remark}
\newtheorem{example}[lemma]{Example}
\newcommand{\mult}{\mathrm{mult}}
\newcommand{\Q}{\mathbb{Q}}
\newcommand{\R}{\mathbb{R}}
\newcommand{\N}{\mathbb{N}}
\newcommand{\C}{\mathbb{C}}
\newcommand{\B}{\mathbf{B}}
\newcommand{\D}{\Delta}
\newcommand{\supp}{\mbox{Supp}}
\newcommand{\exc}{\mathrm{Exc}}
\newcommand{\ord}{\mathrm{ord}}
\newcommand{\num}{\textrm{num}}
\newcommand{\NNef}{\mathrm{NNef}}
\def\ord{\operatorname{ord}}
\def\supp{\text{Supp}}
\def\codim{\text{codim}}
\begin{document}

\title{Asymptotic base loci on singular varieties}
\thanks{{\it Math classification:} 14C20}
\thanks{\emph{Key words:} Base loci, multiplier ideals, nef and abundant divisors, KLT pairs}
\thanks{\today}

\author{Salvatore Cacciola and Lorenzo Di Biagio}
\address{Dipartimento di Matematica, Universit\`a degli Studi ``Roma Tre''- Largo S.\ Leonardo Murialdo 1, 00146, Roma, Italy}
\email{cacciola@mat.uniroma3.it}
\address{Dipartimento di Matematica, Universit\`a degli Studi ``Roma Tre''- Largo S.\ Leonardo Murialdo 1, 00146, Roma, Italy}
\email{dibiagio@mat.uniroma3.it}

\begin{abstract}
We prove that the non-nef locus and the restricted base locus of a pseudoeffective divisor coincide on KLT pairs. We also extend to KLT pairs F.\ Russo's characterization of nef and abundant divisors by means of asymptotic multiplier ideals.\end{abstract}

\maketitle


\setcounter{section}{0}
\setcounter{lemma}{0}

\section{Introduction}
In the paper \cite{ELMNP} by Ein, Lazarsfeld, Musta\c t\u a, Nakamaye and Popa the asymptotic behavior of base loci of line bundles
on complex projective varieties is investigated by making use of various invariants.
In particular, when $X$ is a normal projective variety and $D$ is a big $\mathbb{Q}$-Cartier divisor on $X$,
they defined the \textit{restricted base locus} of $D$ as
$$\B_-(D):=\bigcup_A \B(D+A),$$
where the union is taken over all ample $\Q$-Cartier divisors on $X$ (see Definition \ref{bmenodef}).
In this way many pathologies associated to the mere stable base locus $\B(D)$ disappear.

In the same paper the five authors, inspired by the work of Nakayama in \cite{Nakayama}, defined also an asymptotic measure of the singularities of $D$: if
 $v$ is a geometric valuation on $X$, then the \textit{asymptotic order of vanishing} of $D$ along $v$ is
$$v(\|D\|):=\lim_{p\rightarrow \infty} \frac{v(|pD|)}{p}$$ (see Definition  \ref{asymptotic}).

They noticed that $v(\|D\|)$ is a numerical invariant and, by passing to limits, it is possible to define a \textit{numerical order of vanishing} on
every pseudoeffective $\R$-Cartier divisor on $X$ that, following the notation of \cite{BBP}, we will denote by $v_{num}(D)$
(see \S 2.2).
Again from \cite{BBP} we borrow the definition of the non-nef locus $\NNef(D)$ as the subset of $X$ given by the union of all the centers of
the valuations $v$ such that $v_{num}(D)>0$ (Definition \ref{pino}). This is a straightforward generalization of the numerical base locus $\mathrm{NBs}(D)$ of Matsuda (\cite{M}) and Nakayama (\cite{Nakayama}).
As the name itself suggests, $\NNef(D)=\emptyset$ if and only if $D$ is nef. Notice that the same holds for $\B_- (D)$.
Therefore it is natural to wonder if these two loci coincide in general.

By \cite[Lemma V.1.9(1)]{Nakayama} (see also \cite[Proposition 2.8]{ELMNP}) the following holds:

\begin{theorem}[Nakayama]\label{thm:casoliscio}
Let $X$ be a smooth projective variety and let $D$ be a big $\Q$-Cartier divisor on $X$.
Then $\B_-(D)=\NNef(D)$.
\end{theorem}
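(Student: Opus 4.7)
The plan is to prove the two inclusions separately. For $\NNef(D)\subseteq\B_-(D)$, fix an ample $\Q$-Cartier divisor $A$ and let $v$ be a geometric valuation with $v_{num}(D)>0$. By the subadditivity of $v(\|\cdot\|)$, together with the vanishing of $v(\|\cdot\|)$ on ample classes, the function $\epsilon\mapsto v(\|D+\epsilon A\|)$ is nonincreasing on $(0,\infty)$; its limit as $\epsilon\to 0^+$ is, by definition of the numerical order of vanishing, $v_{num}(D)$. Since $v_{num}(D)>0$, we therefore have $v(\|D+\epsilon A\|)>0$ for all sufficiently small $\epsilon>0$. But $D+\epsilon A$ is big, so $v(\|D+\epsilon A\|)$ is the honest asymptotic order $\lim_{p}v(|p(D+\epsilon A)|)/p$, and hence $v(|p(D+\epsilon A)|)\geq 1$ for $p\gg 0$. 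This forces $C_X(v)\subseteq\B(D+\epsilon A)\subseteq \B_-(D)$; taking the union over all such $v$ gives the first inclusion.

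For $\B_-(D)\subseteq\NNef(D)$ I would invoke Nakayama directly. His Lemma V.1.9(1) asserts that, under the hypotheses of the theorem, $\B_-(D)=\Supp N_\sigma(D)$, where $N_\sigma(D)=\sum_\Gamma \sigma_\Gamma(D)\,\Gamma$ is the negative part of the divisorial Zariski decomposition, a finite sum indexed by the prime divisors $\Gamma$ on $X$ with $\sigma_\Gamma(D)>0$. For each such $\Gamma$ the divisorial valuation $v_\Gamma=\ord_\Gamma$ has center $\Gamma$ on $X$ and satisfies $v_{\Gamma,num}(D)=\sigma_\Gamma(D)>0$; hence $\Gamma\subseteq\NNef(D)$. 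Unioning over the (finitely many) components of $N_\sigma(D)$ yields $\B_-(D)\subseteq\NNef(D)$.

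The only genuine obstacle is packaged into Nakayama's V.1.9(1), which is precisely the statement that, for big $D$ on a smooth variety, $\B_-(D)$ is a union of prime divisors of $X$ detected by the invariants $\sigma_\Gamma(D)$. Notice that the first inclusion above required neither smoothness of $X$ nor bigness of $D$ in any essential way, apart from the identification $v_{num}=v(\|\cdot\|)$ on big classes needed to interpret $v(\|D+\epsilon A\|)$ as a genuine asymptotic order. The real challenge addressed in the remainder of the paper will therefore be to produce an adequate replacement for the divisorial description $\B_-(D)=\Supp N_\sigma(D)$ when one passes to the KLT setting with merely pseudoeffective $D$.
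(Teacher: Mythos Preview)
Your proof is correct, and it matches what the paper does: the paper does not give an independent argument for this theorem but simply records it as known, citing Nakayama's Lemma V.1.9(1) (and \cite[Proposition 2.8]{ELMNP} as an alternative). Your derivation of $\NNef(D)\subseteq\B_-(D)$ is fine (the paper later invokes \cite[Lemma 1.8]{BBP} for this inclusion on arbitrary normal varieties), and your derivation of the reverse inclusion from $\B_-(D)=\Supp N_\sigma(D)$ is exactly the content of the Nakayama citation.

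One comment on your final paragraph, which is commentary rather than proof: your prediction that the paper will generalize the divisorial identity $\B_-(D)=\Supp N_\sigma(D)$ to the KLT setting is not what actually happens. The paper instead follows the multiplier-ideal route of \cite{ELMNP}: it sandwiches $\B_-(D)$ via
\[
\NNef(D)\ \subseteq\ \B_-(D)\ \subseteq\ \bigcup_{p}\mathcal{Z}\bigl(\mathcal{J}((X,\Delta);\|pD\|)\bigr),
\]
the second inclusion coming from Nadel vanishing plus Castelnuovo--Mumford regularity, and then closes the loop by showing (via a multiplicity estimate on a log resolution) that the union of zero-loci is contained in $\mathrm{NNA}(D)$, which equals $\NNef(D)$ when $D$ is big. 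The divisorial Zariski decomposition plays no role in the main argument. This does not affect the validity of your proof of the present statement, but it is worth correcting your picture of where the paper is headed.
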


It is then trivial to show that, passing to limits,  the same equality holds for pseudoeffective $\R$-divisors.


In this paper we generalize this result to normal varieties with mild singularities:

\begin{theorem} \label{riassuntivo}
Let $X$ be a normal projective variety and suppose that there exists an effective $\Q$-Weil divisor $\D$ such that $(X,\D)$ is a KLT pair.

Then for every $\R$-Cartier pseudoeffective divisor $D$ on $X$ we have that $\B_-(D)=\NNef(D)$.
\end{theorem}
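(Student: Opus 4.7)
The plan is to extend Nakayama's proof of the smooth case (Theorem~\ref{thm:casoliscio}) by replacing ordinary asymptotic multiplier ideals with asymptotic multiplier ideals of the pair $(X,\D)$, combined with Kawamata--Viehweg--Nadel vanishing for KLT pairs.

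The inclusion $\NNef(D)\subseteq\B_-(D)$ already holds in the general normal setting. Indeed, suppose the center of a geometric valuation $v$ is not contained in $\B_-(D)$; then, since $\B(D+A)\subseteq\B_-(D)$ for every ample $\Q$-Cartier $A$, this center is not contained in $\B(D+A)$ either, so there exist $p$ and a section of $p(D+A)$ non-vanishing at some point of the center. This forces $v(|p(D+A)|)=0$, and by subadditivity $v(\|D+A\|)=0$; passing to the limit along $A\to 0$ yields $v_{num}(D)=0$. A standard perturbation, replacing $D$ by $D+A/n$ for a fixed ample $A$ and letting $n\to\infty$, reduces the reverse inclusion $\B_-(D)\subseteq\NNef(D)$ to the case where $D$ is big and $\Q$-Cartier.

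Under that assumption fix $x\not\in\NNef(D)$, and let $\pi\colon Y\to X$ be a log resolution of $(X,\D)$, writing $K_Y+\widetilde\D=\pi^\ast(K_X+\D)+\sum_E a_E E$ with $a_E>-1$ by the KLT hypothesis. Consider the asymptotic multiplier ideal of the pair, $\mathcal J((X,\D);\|pD\|)$, defined via $\pi$ and Nakayama's negative part $N_\sigma$. By the Nadel vanishing theorem for KLT pairs combined with Castelnuovo--Mumford regularity, one obtains a fixed ample divisor $H$ on $X$, depending only on $(X,\D)$, such that $\mathcal J((X,\D);\|pD\|)\otimes\mathcal O_X(pD+H)$ is globally generated for all sufficiently large $p$. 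Setting $A'=H/p$ and rewriting $pD+H\sim p(D+A')$, it suffices to show that $\mathcal J((X,\D);\|pD\|)_x=\mathcal O_{X,x}$ for some $p\gg 0$: such triviality of the stalk yields a global section of $p(D+A')$ non-vanishing at $x$, giving $x\not\in\B(D+A')$ with $A'=H/p$ of arbitrarily small norm, hence $x\not\in\B_-(D)$.

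The main obstacle is this triviality of the multiplier ideal at $x$. By the defining formula it reduces to the inequality $\lceil a_E-\ord_E(N_\sigma(\pi^\ast pD))\rceil\geq 0$ on every prime divisor $E\subseteq Y$ with $\pi(E)\ni x$. Since $D$ is big, $\ord_E(N_\sigma(\pi^\ast pD))=p\cdot v_E(\|D\|)$; the hypothesis $x\not\in\NNef(D)$ gives $v_{E,num}(D)=0$, which for a big divisor coincides with $v_E(\|D\|)=0$, so the inequality reduces to $\lceil a_E\rceil\geq 0$, which is guaranteed precisely by the KLT condition $a_E>-1$. The subtle point is that a general geometric valuation $v$ with center containing $x$ need not be realized by a divisor on the particular $Y$ chosen; this is handled by passing to a further log resolution of $(X,\D)$ on which $v$ becomes divisorial, using that the asymptotic multiplier ideal is independent of the chosen model.
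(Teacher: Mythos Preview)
Your overall architecture matches the paper's: reduce to big $\Q$-divisors, establish $\B_-(D)\subseteq\bigcup_p\mathcal Z(\mathcal J((X,\D);\|pD\|))$ via Nadel vanishing plus Castelnuovo--Mumford regularity (this is Lemma~\ref{nadel}), and then close the loop by showing this union sits inside $\NNef(D)$. The problem is your execution of the last step.

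You assert that triviality of $\mathcal J((X,\D);\|pD\|)$ at $x$ ``reduces to the inequality $\lceil a_E-\ord_E(N_\sigma(\pi^\ast pD))\rceil\geq 0$ on every prime divisor $E\subseteq Y$'' for a fixed log resolution $\pi:Y\to X$ of $(X,\D)$. But the asymptotic multiplier ideal is not defined via $N_\sigma$: it equals $\mathcal J((X,\D);\tfrac1k D_k)$ for a general $D_k\in|kpD|$ with $k$ sufficiently divisible, computed on a log resolution of both $(X,\D)$ \emph{and} $D_k$. The coefficient that actually appears is $\tfrac1k\, v_E(|kpD|)$, which is $\geq p\,v_E(\|D\|)$ but can be strictly larger for the particular $k$ at hand. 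You cannot simply send $k\to\infty$ while keeping the model fixed, because the resolution that validates the formula changes with $k$; on each higher model new exceptional divisors $E'$ appear, and although KLT gives $a_{E'}>-1$ and $x\notin\NNef(D)$ gives $v_{E'}(\|D\|)=0$, there is no uniform control of $\tfrac1k\, v_{E'}(|kpD|)$ against $1+a_{E'}$ across all such $E'$ simultaneously. Your closing remark about blowing up further to realize a given valuation does not break this circularity --- it only introduces more divisors subject to the same difficulty.

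The paper's fix (Proposition~\ref{spariscedelta} and Theorem~\ref{principale}) is to abandon the discrepancy formula and use instead the elementary multiplicity criterion of Theorem~\ref{thm:mult}. One passes, via \cite[Proposition~2.36]{Kollar}, to a log resolution $f:Y\to X$ on which the effective part $\D'_Y$ of the crepant boundary has \emph{disjoint} components, so that $\mult_y\D'_Y<1$ for every $y\in Y$. Then if $\mathcal J((Y,\D'_Y);\tfrac1k D_k)$ is nontrivial at $y$, Theorem~\ref{thm:mult} forces $\mult_y(\tfrac1k D_k)\geq 1-\mult_y\D'_Y=:c_y>0$, a lower bound \emph{independent of $k$ and requiring no resolution of the linear series}. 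Now letting $k\to\infty$ is legitimate and gives $\mult_y(\|f^*D\|)\geq c_y>0$, so $y\in\mathrm{NNA}(f^*D)$; pushing forward via Lemmas~\ref{immagine ideale} and~\ref{immagine nna} finishes. This uniform bound coming from the multiplicity of the boundary, rather than from discrepancies of ever-changing exceptional divisors, is exactly the ingredient your argument is missing.
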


This is a partial answer to a conjecture of Boucksom, Broustet and Pacienza (see \cite[Conjecture 1.9]{BBP}).

The hypothesis of the existence of a KLT boundary is necessary in our proof because this is the only context where asymptotic multiplier ideals
are not strongly influenced by the singularities of $X$, so that they reflect the asymptotic behavior of the base loci.
We could avoid it only in the case of surfaces (Corollary \ref{superfici}). See also Corollary \ref{corb-ennef} and Corollary \ref{dim0} for slight generalizations.

On the other hand, one can consider \textit{asymptotic} orders of vanishing $v(\|D\|)$ for every effective divisor.
These are in general different from \textit{numerical} orders of vanishing
(Remark \ref{casobig}) and we have that $v(\|D\|)=0$ for every geometric valuation $v$ if and only if $D$ is nef and abundant
(Lemma \ref{Russo_valuations}).
In analogy with the definition of the non-nef locus, we use these asymptotic orders of vanishing to define a \emph{non nef-abundant locus} $\mathrm{NNA}(D)$
(see Definition \ref{gigetto}).
In particular, we prove, for every effective Cartier divisor $D$ on a normal projective variety $X$ admitting an effective KLT boundary $\D$, the equality
$$\mathrm{NNA}(D)=\bigcup_{p \in \N} \mathcal{Z}(\mathcal{J}((X,\D);\|pD\|))$$
(see Corollary \ref{cor:merging}).
Note that, when $D$ is big, this means
$$\B_-(D)=\bigcup_{p \in \N} \mathcal{Z}(\mathcal{J}((X,\D);\|pD\|)),$$
so that, in particular, we get a generalization of \cite[Corollary 2.10]{ELMNP}.

Moreover, as a corollary of this result, we give a characterization
of nef and abundant divisors in terms of triviality of asymptotic multiplier ideals
(see Corollary \ref{charnefabundant}), generalizing to the KLT case the main theorem of F.\ Russo's paper \cite{R}.\\

This paper is structured as follows:
in Section 2 we just review the relevant definitions and introduce the non nef-abundant locus;
in Section 3 we study the relationship between the restricted base locus and the non-nef locus in the case of surfaces;
in Section 4 we prove the main theorem; in Section 5 we present some consequences about nef and abundant divisors.

\subsection*{Acknowldegments}
We are deeply grateful to Prof.\ Angelo Felice Lopez for proposing us the problem and for many helpful discussions.
We also wish to thank Prof.\ Tommaso de Fernex for some useful conversations, Prof. S\'ebastien Boucksom for suggesting us a simpler proof of Proposition \ref{spariscedelta} and the anonymous referee for many valuable suggestions.

\section{Preliminaries}
\subsection{Notation and conventions}

We will work over the field of complex numbers $\mathbb{C}$. 
Given a variety $X$ and a coherent sheaf of ideals $\mathcal{J}\subseteq \mathcal{O}_X$ we denote by $\mathcal{Z}(\mathcal{J})$
the closed subset of $X$ defined by $\mathcal{J}$, without any scheme structure.

A \textit{pair} $(X,\D)$ consists of a normal projective variety $X$ and a Weil $\mathbb{Q}$-divisor $\D$ on $X$ such that
$K_X+\D$ is $\Q$-Cartier. A pair is \textit{effective} if $\Delta \geq 0$.
From now on, unless otherwise stated, by \emph{divisor} we mean an integral Cartier divisor;
for $\mathbb{K}=\Q$, $\R$, by \emph{$\mathbb{K}$-divisor} we mean a $\mathbb{K}$-Cartier divisor.
Given a divisor (or line bundle) $D$ on a variety $X$, we denote by $\kappa(X,D)$ its Kodaira dimension.

Given a smooth variety $X$ and a $\Q$-divisor $D$ on $X$, we denote by $\mult_x D$ the \textit{multiplicity} at $x \in X$ of $D$, in the sense of \cite[Definition 9.3.1]{LazII}.

\subsection{Multiplier ideals}

In this subsection we recall some definitions and some well-known facts about multiplier ideals.
We refer to \cite{LazII} for a more exhaustive treatment of this subject.


\begin{definition}(cf. \cite[Definition 9.3.56]{LazII}).
Let $(X,\D)$ be a pair and let us denote by $\mu:Y\to X$ a log-resolution of the pair $(X,\D)$.
Then for every prime divisor $E$ on $Y$,
there exist canonically defined rational numbers $a(E)=a(E,X,\D)$
such that
$$K_{Y}\equiv\mu^*(K_X+\D)+\sum a(E) E.$$
 Note that all but finitely many of these numbers are zero.

If $D$ is a $\Q$-divisor on $X$ and $\mu$ is also a log-resolution of $(X,\D+D)$, then
we can consider uniquely defined numbers $b(E)\in \Q$ such that
$\mu^*(-D)=\sum b(E)\cdot E$.
The \emph{multiplier ideal} associated to $D$ on the pair $(X,\D)$ is the sheaf
$$\mathcal{J}((X,\D);D):=\mu_*\mathcal{O}_Y\left(\sum \ulcorner a(E)+b(E)\urcorner E \right),$$ where $\ulcorner \cdot \urcorner$ denotes the round up.

By convention, we put $\mathcal{J}(X,\D):=\mathcal{J}((X,\D);0)$.

Similarly, if $|V|$ is a non-empty linear series on $X$ and $\mu$ is also a log-resolution of $|V|$,
we can write $\mu^*|V|=|W|+F$,
where $|W|$ has no base points and $F+\exc(\mu)$ has simple normal crossing support.
We can consider uniquely defined  numbers $d(E)\in \Q$ such that
$-F=\sum d(E)\cdot E$.
If $c>0$ is a rational number, the \emph{multiplier ideal} associated to $c$ and $|V|$ on the pair $(X,\D)$ is the sheaf

$$\mathcal{J}((X,\D);c|V|):=\mu_*\mathcal{O}_Y\left(\sum \ulcorner a(E)+c \cdot d(E)\urcorner E \right).$$
\end{definition}

Note that $\mathcal{J}((X,\D); c|V|)\subseteq \mathcal{O}_X$ if the pair $(X,\D)$ is effective
and $\mathcal{J}((X,\D); D)\subseteq \mathcal{O}_X$ if, in addition, the $\Q$-divisor $D$ is effective.

Moreover it is immediate to see that $\mathcal{J}((X,\D); D)=\mathcal{J}(X,\D+D)$.

\begin{definition}(cf. \cite[Definition 11.1.2]{LazII}).
Let $(X,\D)$ be a pair. Consider a divisor $D$ on $X$ such that $\kappa(X,D)\geq 0$ and a rational number $c>0$.
The \emph{asymptotic multiplier ideal sheaf} associated to $c$ and $D$ on the pair $(X,\D)$, denoted by
$$\mathcal{J}((X,\D); c\|D\|),$$
is defined as the unique maximal member in the family of ideals $\left\{ \mathcal{J}((X,\D);\frac{c}{p} |pD|)\right\}$, where $p$ runs over all positive integers such that $|pD| \not = \emptyset$.
\end{definition}

\begin{proposition}(cf. \cite[Proposition 9.2.26, p.\ 185, Proposition 11.1.4]{LazII}).
Let $(X,\D)$ be a pair and let $|D|$ be a complete linear series on $X$ such that $\kappa(X,D)\geq 0$.
Then for every sufficiently large and divisible $k\in \N$, if $D_k\in |kD|$ is a general divisor, we have that
$$\mathcal{J}((X,\D); \|D\|)=\mathcal{J}((X,\D); \frac{1}{k}|kD|)=\mathcal{J}((X,\D); \frac{1}{k}D_k).$$

\end{proposition}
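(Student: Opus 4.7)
The plan is to prove the two equalities separately. For the first, I would establish that the family $\{\mathcal{J}((X,\D); \frac{1}{p}|pD|)\}_p$ is \emph{directed} under divisibility: whenever $p$ divides $q$, one has
$$\mathcal{J}((X,\D); \tfrac{1}{p}|pD|)\subseteq \mathcal{J}((X,\D); \tfrac{1}{q}|qD|).$$
To see this, pass to a common log-resolution $\mu\colon Y\to X$ of $(X,\D)$, $|pD|$ and $|qD|$, write $\mu^*|pD|=|W_p|+F_p$ and $\mu^*|qD|=|W_q|+F_q$, and use that summing $q/p$ divisors of $|pD|$ yields a divisor in $|qD|$, forcing $F_q\leq (q/p)F_p$. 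Dividing by $q$ gives $\frac{1}{q}d_q(E)\geq \frac{1}{p}d_p(E)$ coefficientwise, hence the inclusion on round-ups. Since coherent ideal sheaves on $X$ satisfy the ascending chain condition, the family admits a maximum, attained at some $k_0$; by directedness the maximum is also attained at every multiple of $k_0$, which proves the first equality for $k$ sufficiently large and divisible.

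For the second equality, fix such a $k\geq 2$ and take a log-resolution $\mu\colon Y\to X$ of the pair $(X,\D)$ together with $|kD|$. Write $\mu^*|kD|=|W_k|+F_k$ with $|W_k|$ base-point-free. By Bertini, a general $M_k\in|W_k|$ is smooth, meets $F_k+\exc(\mu)$ transversally, and projects to a general divisor $D_k=\mu_*M_k\in|kD|$; in particular $\mu$ remains a log-resolution of $(X,\D+\frac{1}{k}D_k)$ and $\mu^*D_k=M_k+F_k$.

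Plugging into the definition,
$$\mathcal{J}\!\left((X,\D);\tfrac{1}{k}D_k\right)=\mu_*\mathcal{O}_Y\!\left(\sum \ulcorner a(E)+\tfrac{1}{k}d(E)\urcorner E\;+\;\ulcorner -\tfrac{1}{k}\urcorner M_k\right),$$
and since $-\frac{1}{k}\in(-1,0)$ for $k\geq 2$ its round-up is $0$, so the $M_k$-term disappears and the right-hand side coincides with $\mathcal{J}((X,\D);\frac{1}{k}|kD|)$.

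The main subtlety I expect is the first step: one must verify that working on a single common log-resolution really gives the right comparison between $F_p$ and $F_q$, and more generally be careful that the round-ups behave well under refinements of the resolution (an argument standardly handled by observing that both multiplier ideals are invariant under passing to higher resolutions). Once the directedness inequality is in place, the Noetherian maximality argument and the Bertini-based identification with a general member are essentially formal.
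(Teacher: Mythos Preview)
The paper does not supply its own proof of this proposition: it is stated as a recall from Lazarsfeld's book (Propositions 9.2.26 and 11.1.4 there), with no argument given. Your proposal reproduces precisely the standard proof from that reference---the directedness of the family $\{\mathcal{J}((X,\D);\tfrac{1}{p}|pD|)\}_p$ via the comparison $F_q\le (q/p)F_p$ on a common log-resolution, the Noetherian maximality, and the Bertini argument identifying the multiplier ideal of the linear series with that of a general member---so there is nothing to contrast.

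One small slip: you write $D_k=\mu_*M_k$, but in general $F_k$ need not be $\mu$-exceptional, so the correct relation is $D_k=\mu_*(M_k+F_k)$, or equivalently $\mu^*D_k=M_k+F_k$. Since it is the latter identity that you actually use in the round-up computation, the argument is unaffected.
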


\begin{theorem}(cf. \cite[Proposition 9.3.2]{LazII}). \label{thm:mult}
Let $D$ be an effective $\Q$-divisor on a smooth projective variety $X$ and let $x$ be a point on $X$.
If  $\mult_x D < 1$,
then $$\mathcal{J}(X,D)_x=\mathcal{O}_{X,x}.$$
\end{theorem}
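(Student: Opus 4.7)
The plan is to verify directly, on a suitable log resolution of $(X,D)$, the inequality that characterizes triviality of the multiplier ideal. Fix a log resolution $\mu \colon Y \to X$ of $(X,D)$ that factors through the blow-up $\pi \colon X_1 \to X$ of $X$ at $x$, so $\mu = \pi \circ \nu$ for some $\nu \colon Y \to X_1$; let $E$ be the exceptional divisor of $\pi$ and $\widetilde{D}$ the strict transform of $D$. Setting $n = \dim X$ and $m = \mult_x D < 1$, one has the basic identities
$$\pi^* D = \widetilde{D} + m E, \qquad K_{X_1/X} = (n-1) E.$$
By definition of the multiplier ideal, $\mathcal{J}(X,D)_x = \mathcal{O}_{X,x}$ is equivalent to the pointwise inequality
$$\ord_F(\mu^* D) \,<\, \ord_F(K_{Y/X}) + 1 \qquad (\star)$$
holding for every prime divisor $F$ on $Y$ whose image meets $x$.

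Three kinds of such $F$ arise. If $F$ is the proper transform of a component $D_i$ of $D$ passing through $x$, then $F$ is non-exceptional, so $\ord_F(K_{Y/X}) = 0$; writing $D = \sum d_j D_j$ and using $\mult_x D_i \ge 1$, one has $d_i \le d_i \cdot \mult_x D_i \le \mult_x D = m < 1$, which is precisely $(\star)$. If $F$ is the strict transform of $E$, then $\ord_F(\mu^* D) = m$ and $\ord_F(K_{Y/X}) = n-1$, so $(\star)$ reads $m < n$, clear from $m < 1 \le n$. The remaining, more delicate case is when $F$ is exceptional for $\nu$ with $\nu(F) \subsetneq E$.

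For this last case the plan is to pass the problem to $X_1$. The crucial observation is that for every $y \in E$ one has $\mult_y \widetilde{D} \le \mult_x D < 1$: multiplicities do not grow when one takes the strict transform through the blow-up of the center, as one sees directly from the local expansion of the defining equations. Thus the pair $(X_1, \widetilde{D})$ again satisfies the hypothesis of the theorem at every point of $E$, and one iterates the reduction --- for example by induction on the number of extra blow-ups needed to refine $X_1$ into $Y$, or on $\dim X$ (with trivial base case $n=1$). After absorbing the log-discrepancy contribution $(n-1) E$, the coefficient of $E$ that survives on $X_1$ is $m - (n-1)$, which is strictly less than $1$ in all cases and in fact non-positive when $n \ge 2$, so the $E$-direction cannot obstruct $(\star)$. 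The main obstacle is organizing this recursion so that it terminates; this is precisely where the strict inequality $m < 1$ is indispensable, because it is this strict bound that is preserved along the tower of strict transforms and therefore makes the induction close.
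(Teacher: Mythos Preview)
The paper does not give its own proof of this statement; it is quoted from \cite[Proposition 9.3.2]{LazII}. I therefore assess your argument on its own.

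Your reduction to $(X_1,\widetilde D)$ is correct in spirit: the identities $\mu^*D=\nu^*\widetilde D+m\,\nu^*E$ and $K_{Y/X}=K_{Y/X_1}+(n-1)\nu^*E$ do reduce $(\star)$, for an $F$ that is exceptional over $X_1$, to the analogous inequality for the pair $(X_1,\widetilde D)$; and the key observation $\mult_y\widetilde D\le\mult_x D$ for $y\in E$ is true. The problem is termination. Neither of your proposed induction parameters works. First, $\dim X_1=\dim X$, so induction on dimension is vacuous. Second, while $\nu:Y\to X_1$ is a log resolution of $(X_1,\widetilde D)$, to run \emph{your} argument again at a point $y\in E$ you need the resolution to factor through $\mathrm{Bl}_y X_1$, and $\nu$ has no reason to do so. If instead you choose at each step a fresh log resolution factoring through the blow-up of the new point, nothing measurable has decreased and the recursion need not stop. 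The strict bound $m<1$ is indeed preserved along strict transforms, as you say, but preservation of a strict inequality is not by itself a well-ordering; you still need a quantity that drops.

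A clean repair is to strengthen the inductive statement so that it runs along a \emph{fixed} tower of smooth-center blow-ups $Y=X_N\to\cdots\to X_0=X$ and allows an arbitrary first center $W$, not just $W=\{x\}$. Concretely, prove by induction on $N$ that
\[
\ord_F(\mu^*D)\;\le\;(\mult_x D)\bigl(1+\ord_F K_{Y/X}\bigr)
\]
for every prime $F$ on $Y$ whose center on $X$ contains $x$. In the inductive step with first center $W\ni x$ of codimension $c\ge 2$, one uses $\mult_W D\le\mult_x D$ (upper semicontinuity) together with $\mult_y\widetilde D\le\mult_x D$ for $y\in\pi^{-1}(x)$, which holds for blow-ups along any smooth center, not only points; the factor $c-1\ge 1$ coming from $K_{X_1/X}=(c-1)E$ then absorbs the contribution $m\cdot\ord_F(\nu^*E)$ exactly as in your computation. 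Since the resolution is fixed, $N$ genuinely drops by one at each step, and the desired inequality $\ord_F(\mu^*D)<1+\ord_F K_{Y/X}$ follows at once from $\mult_x D<1$.
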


\begin{theorem}[Nadel's theorem](cf. \cite[Theorem 9.4.17]{LazII}). \label{thm:nadel}
Let $(X,\D)$ be a pair and let $D$ be a $\Q$-divisor on $X$.
If $N$ is a divisor such that $N-(K_X+\D+D)$ is big and nef,
then
$$H^i(X,\mathcal{O}_X(N)\otimes \mathcal{J}((X,\D);D))=0$$
for every $i>0$.

Moreover if $D$ is integral and $\kappa(X,D)\geq 0$, then
$$H^i(X,\mathcal{O}_X(N)\otimes \mathcal{J}((X,\D);\|D\|))=0.$$
\end{theorem}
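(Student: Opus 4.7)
The plan is to derive both vanishings from Kawamata--Viehweg on a log-resolution, following the standard template for Nadel's theorem; the presence of the boundary $\Delta$ only adds some bookkeeping.

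For the first statement, I would fix a log-resolution $\mu\colon Y\to X$ of $(X,\Delta+D)$. By the definition of the multiplier ideal,
$$\mathcal{J}((X,\Delta);D)=\mu_*\mathcal{O}_Y(F),\qquad F:=\sum \ulcorner a(E)+b(E)\urcorner E.$$
Combining the relations $K_Y\equiv\mu^*(K_X+\Delta)+\sum a(E)E$ and $\mu^*(-D)=\sum b(E)E$, and using that $K_Y$ and $\mu^*N$ are integral, one rewrites $\mu^*N+F=K_Y+\ulcorner\mu^*(N-(K_X+\Delta+D))\urcorner$. Local vanishing for multiplier ideals (\cite[Theorem 9.4.1]{LazII}) gives $R^i\mu_*\mathcal{O}_Y(F)=0$ for $i>0$, so the projection formula together with the Leray spectral sequence identifies
$$H^i(X,\mathcal{O}_X(N)\otimes\mathcal{J}((X,\Delta);D))=H^i\big(Y,\mathcal{O}_Y(K_Y+\ulcorner\mu^*(N-(K_X+\Delta+D))\urcorner)\big).$$
Since $N-(K_X+\Delta+D)$ is big and nef by hypothesis, so is its pullback, and Kawamata--Viehweg vanishing on $Y$ kills the right-hand side for $i>0$.

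For the asymptotic statement I would invoke the proposition stated just above the theorem: for $k$ sufficiently divisible and a general $D_k\in|kD|$,
$$\mathcal{J}((X,\Delta);\|D\|)=\mathcal{J}((X,\Delta);\tfrac{1}{k}D_k).$$
Because $\tfrac{1}{k}D_k\sim_{\Q}D$, the divisor $N-(K_X+\Delta+\tfrac{1}{k}D_k)$ is numerically equivalent to $N-(K_X+\Delta+D)$, hence still big and nef. The desired vanishing then follows from the first part applied to the $\Q$-divisor $\tfrac{1}{k}D_k$ in place of $D$.

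The only genuinely delicate step is the rewriting that puts the integrand on $Y$ into the shape $K_Y+\ulcorner(\textrm{big and nef})\urcorner$ required by Kawamata--Viehweg; once local vanishing for multiplier ideals is available, the rest of both parts is essentially bookkeeping. No hypothesis about the singularities of $X$ beyond the existence of the pair data is needed, since the vanishing is pulled back to the smooth model $Y$.
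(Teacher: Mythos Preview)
Your argument is correct and is precisely the standard proof of Nadel vanishing in the pair setting: pass to a log-resolution, use local vanishing to identify the cohomology on $X$ with that on $Y$, rewrite the line bundle as $K_Y+\ulcorner(\text{big and nef})\urcorner$, and apply Kawamata--Viehweg; the asymptotic case then follows by choosing $k\gg 0$ and a general $D_k\in|kD|$ so that the asymptotic ideal is computed by $\tfrac{1}{k}D_k\sim_{\Q}D$. The only point worth flagging is that the round-up identity $\mu^*N+F=K_Y+\ulcorner\mu^*(N-(K_X+\Delta+D))\urcorner$ uses that $\mu^*N$ and $K_Y$ are integral, and that the fractional part has SNC support because $\mu$ is a log-resolution of $(X,\Delta+D)$, so the hypotheses of Kawamata--Viehweg are indeed met; you acknowledge this but it is worth making explicit.

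There is, however, nothing in the paper to compare against: the theorem is stated with a reference to \cite[Theorem 9.4.17]{LazII} and no proof is given. Your write-up is essentially the argument one finds in that reference.
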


Given the language of multiplier ideals, we can define KLT pairs quickly and tidily:

\begin{definition}
Let $(X,\Delta)$ be a pair. $(X,\Delta)$ is said to be \emph{Kawamata Log Terminal}, or simply \emph{KLT}, if $\mathcal{J}(X,\Delta)=\mathcal{O}_X$.
More generally the \emph{non-klt locus} of the pair $(X,\D)$ is defined as
$$\mathrm{Nklt}(X,\Delta) := \mathcal{Z}(\mathcal{J}(X,\Delta)).$$
\end{definition}

It is well known that this definition of a KLT pair coincides with \cite[Definition 2.34]{Kollar}.

\subsection{Asymptotic base loci}


We recall the following well-known definitions:

\begin{definition}
Let $X$ be a normal projective variety, let $D$ be an $\R$-divisor on $X$.
\begin{enumerate}
\item $|D|_{\equiv}:=\{E \mid E \text{ effective } \mathbb{R}\text{-divisor}, E \equiv D\}$,
\item $|D|_{\mathbb{R}}:=\{E \mid E \text{ effective } \mathbb{R}\text{-divisor}, E \sim_\mathbb{R} D\}$, where $E \sim_{\mathbb{R}}D$ means that $E-D$ is an
 $\mathbb{R}$-linear combination of principal divisors $(f)$, $f \in \mathbb{C}(X)$,
\item $|D|_{\mathbb{Q}}:=\{E \mid E \text{ effective } \mathbb{R}\text{-divisor}, E \sim_\mathbb{Q} D\}$.
\end{enumerate}
\end{definition}

\begin{definition} (cf.\ \cite[Definition 3.5.1]{BCHM}).
Let $X$ be a normal projective variety, let $D$ be an $\R$-divisor on $X$. The (real) \textit{stable base locus} of  $D$ is
$$\mathbf{B}(D):= \bigcap_{E \in |D|_\mathbb{R}} \supp(E),$$ where, by convention, we put $\mathbf{B}(D)=X$ if $|D|_\mathbb{R}=\emptyset$.

\end{definition}


\begin{definition}(cf. \cite[Definition 1.2]{ELMNP}).
Let $X$ be a normal projective variety, let $D$ be an $\R$-divisor on $X$. The \textit{augmented base locus} of $D$ is
$$\mathbf{B}_+(D):= \bigcap_{\substack{E \  \mathbb{R}\text{-divisor},E \geq 0\\ D-E \text{ ample }}} \supp(E),$$
if $D$ is big; otherwise $\mathbf{B}_+(D):=X$ by convention.
\end{definition}

\begin{definition}(cf. \cite[Definition 1.12]{ELMNP}). \label{bmenodef}
Let $X$ be a normal projective variety, let $D$ be an $\R$-divisor on $X$. The \textit{restricted base locus} of $D$ is
$$\mathbf{B}_{-}(D):= \bigcup_{\substack{A \  \mathbb{R}\text{-divisor}\\ A \text{ ample }}} \mathbf{B}(D+A).$$
\end{definition}

Let now $D$ be a big $\R$-divisor on a normal projective variety $X$.
Given a geometric valuation $v$ on $X$ we define, as in \cite{BBP}, the \emph{numerical vanishing order} of $D$ along $v$ as
$$v_{\num}(D):=\inf\{v(E) \mid E\in |D|_{\equiv} \}.$$
When $D$ is a pseudoeffective $\R$-divisor, as in \cite{BBP}, we set
$$v_\num(D):=\lim_{\varepsilon\rightarrow 0}v_\num(D+\varepsilon A),$$
with $A$ ample.
Note that this definition is just a generalization of \cite[Definition III.2.2]{Nakayama}.
It is easy to see that it does not depend on the choice of the ample divisor $A$. See \cite[Lemma III.1.5(2)]{Nakayama}. 

\begin{definition} (cf. \cite[Definition III.2.6]{Nakayama} and \cite[Definition 1.7]{BBP}).\label{pino}
Let $X$ be a normal projective variety, let $D$ be an $\R$-divisor on $X$ and let us denote by $c_X(v)$ the center on $X$ of a given geometric
valuation $v$ on $X$.
The \emph{non-nef locus} of $D$ is
$$\mathrm{NNef}(D):= \bigcup\{c_X(v) \mid v_\num(D)>0\},$$
if $D$ is pseudoeffective. When $D$ is not pseudoeffective we put $\NNef(D):=X$.
\end{definition}

Note that, as the name itself suggests, an $\R$-divisor $D$ is nef if and only if $\NNef(D)=\emptyset$, i.e.,  if and only if $v_\num(D)=0$ for every
geometric valuation $v$ on $X$ (see \cite[Remark III.2.8]{Nakayama}, \cite[\S 1.3]{BBP}).\\

The following easy lemma about asymptotic base loci and the subsequent one about approximation of $\mathbb{R}$-divisors will allow us to extend results from big $\mathbb{Q}$-divisors to pseudoeffective $\mathbb{R}$-divisors. We denote by $\| \cdot \|$ any fixed norm on $N^1(X)_{\R}$.

\begin{lemma} \label{bmeno}
Let $X$ be a normal projective variety. Let $D$ be an $\mathbb{R}$-divisor on $X$. Let $\{A_m\}_{m\geq 1}$
be any sequence of ample $\mathbb{R}$-divisors such that $\|A_m\| \rightarrow 0$ in $N^1(X)_{\R}$.
Then $$\mathbf{B}_-(D)=\bigcup_{m \geq 1} \mathbf{B}_-(D+A_m)$$ and $$\emph{NNef}(D)=\bigcup_{m \geq 1} \emph{NNef}(D+A_m).$$
\end{lemma}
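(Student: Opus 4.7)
The plan is to prove each equality by the two opposite inclusions; the common engine is openness of the ample cone in $N^1(X)_{\R}$ combined with $\|A_m\|\to 0$, and, for the $\NNef$ identity, the basic formal properties of $v_{\num}$ (subadditivity and vanishing on nef classes).

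For the $\B_-$ identity, the easy inclusion $\bigcup_m \B_-(D+A_m)\subseteq \B_-(D)$ will follow at once because, for any ample $\R$-divisor $A'$, the sum $A_m+A'$ is still ample, giving $\B(D+A_m+A')\subseteq \B_-(D)$ and hence the inclusion after a union. For the reverse inclusion I would fix an arbitrary ample $A$ and observe that, by openness of the ample cone, the class of $A-A_m$ is again ample for all $m \gg 0$; then the decomposition $D+A=(D+A_m)+(A-A_m)$ with $A-A_m$ ample places $\B(D+A)$ inside $\B_-(D+A_m)$, and taking a union over $A$ concludes.

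The $\NNef$ identity I would prove by splitting on pseudoeffectivity. If $D$ is not pseudoeffective, both sides are $X$: this is definitional on the left, while on the right closedness of the pseudoeffective cone and $A_m\to 0$ force $D+A_m$ to be non-pseudoeffective for $m\gg 0$. If $D$ is pseudoeffective, the inclusion $\bigcup_m \NNef(D+A_m)\subseteq \NNef(D)$ would follow from subadditivity together with $v_{\num}(A_m)=0$ (ample implies nef):
\[
v_{\num}(D+A_m)\leq v_{\num}(D)+v_{\num}(A_m)=v_{\num}(D),
\]
so any geometric valuation contributing to the left also contributes to the right. The reverse inclusion is the step I expect to be the main obstacle. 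There I would fix a geometric valuation $v$ with $v_{\num}(D)>0$ and any ample $\R$-divisor $A$, and use the defining limit $v_{\num}(D)=\lim_{\varepsilon\to 0^+}v_{\num}(D+\varepsilon A)$; subadditivity applied to $D+\varepsilon_1 A=(D+\varepsilon_2 A)+(\varepsilon_1-\varepsilon_2)A$ shows that $\varepsilon\mapsto v_{\num}(D+\varepsilon A)$ is non-decreasing as $\varepsilon\downarrow 0$, so this limit is a supremum and one may pick $\varepsilon_0>0$ with $v_{\num}(D+\varepsilon_0 A)>0$. Openness of the ample cone again ensures $\varepsilon_0 A - A_m$ is ample for $m\gg 0$, and applying subadditivity to $D+\varepsilon_0 A=(D+A_m)+(\varepsilon_0 A-A_m)$ yields
\[
0<v_{\num}(D+\varepsilon_0 A)\leq v_{\num}(D+A_m)+v_{\num}(\varepsilon_0 A-A_m)=v_{\num}(D+A_m),
\]
whence $c_X(v)\subseteq \NNef(D+A_m)$.

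The delicate point is thus the reverse inclusion for $\NNef$: subadditivity of $v_{\num}$ must be invoked twice in opposite roles — once with $A_m$ as the nef summand, once with $\varepsilon_0 A - A_m$ — and this two-sidedness is forced precisely because $v_{\num}$ is defined on the whole pseudoeffective cone only through the $\varepsilon$-perturbation limit with respect to an auxiliary ample class.
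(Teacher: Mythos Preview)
Your argument for $\B_-$ is essentially identical to the paper's: both use openness of the ample cone to write $D+A=(D+A_m)+(A-A_m)$ with $A-A_m$ ample for $m\gg 0$, thereby embedding $\B(D+A)$ (or equivalently $\B_-(D+A)$) into $\B_-(D+A_m)$.

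For $\NNef$ the two proofs diverge in presentation but not in substance. The paper dispatches this case in one line by asserting that $\NNef(D)=\bigcup_{A\text{ ample}}\NNef(D+A)$ is ``clear'' and then observing that the same openness argument used for $\B_-$ applies verbatim. You instead work directly with $v_{\num}$: you split on pseudoeffectivity, and in the pseudoeffective case use subadditivity of $v_{\num}$ together with its vanishing on nef classes, first to get the easy inclusion, then to pass from $v_{\num}(D)>0$ to $v_{\num}(D+\varepsilon_0 A)>0$ (via the limit definition and monotonicity), and finally to $v_{\num}(D+A_m)>0$. Your argument is precisely an unpacking of why the identity $\NNef(D)=\bigcup_{A\text{ ample}}\NNef(D+A)$ holds, so it is more self-contained and makes the dependence on the basic properties of $v_{\num}$ explicit, at the cost of some length. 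The paper's route is shorter but leaves that verification to the reader. Both are correct.
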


\begin{proof}
By definition $\mathbf{B}_-(D) = \bigcup_{A \textrm{ ample}} \mathbf{B}(D+A)$ and thus we also have
that $\mathbf{B}_-(D) = \bigcup_{A \textrm{ ample}} \mathbf{B}_-(D+A)$. For any $A$ ample divisor let $m_A$ be sufficiently large
so that $A-A_{m_A}$ is still ample.
Hence $$\mathbf{B}_-(D+A) = \mathbf{B}_-(D+A_{m_A}+A-A_{m_A}) \subseteq \mathbf{B}_-(D+A_{m_A}).$$
Since clearly $\mathrm{NNef}(D)= \bigcup_{A \text{ ample}} \mathrm{NNef}(D+A)$, then the same proof applies to the non-nef locus.
\end{proof}

%

\begin{lemma} \label{successione} 
Let $X$ be a normal projective variety and let $D$ be an $\mathbb{R}$-divisor on $X$. Then there exists a sequence $\{A_m\}_{m\geq 1}$
of ample $\mathbb{R}$-divisors such that $\|A_m\| \rightarrow 0$ in $N^1(X)_\R$ and $D+A_m$ is a $\mathbb{Q}$-divisor for every $m \geq 1$.
\end{lemma}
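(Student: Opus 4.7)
The plan is to approximate the $\R$-coefficients of $D$ by rationals and then add a small ample $\Q$-divisor to correct for the perturbation, using the openness of the ample cone in $N^{1}(X)_\R$ to guarantee ampleness.

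More precisely, since $D$ is $\R$-Cartier, we may write $D=\sum_{i=1}^{k} r_i C_i$ for finitely many integral Cartier divisors $C_1,\dots,C_k$ and real numbers $r_1,\dots,r_k$. Fix an ample $\Q$-divisor $H$ on $X$ (which exists because $X$ is projective). For each $m\geq 1$ set $\varepsilon_m := 1/m \in \Q_{>0}$, so that $\varepsilon_m H$ represents a class in the interior of the ample cone of $N^{1}(X)_\R$. Because this cone is open, there exists $\delta_m>0$ such that every $\R$-divisor whose class lies within distance $\delta_m$ from the class of $\varepsilon_m H$ (in our fixed norm) is still ample. Choose rationals $q_i^{(m)}\in\Q$ with $|q_i^{(m)}-r_i|$ so small that
\[
\Bigl\|\sum_{i=1}^{k}(q_i^{(m)}-r_i)\,C_i\Bigr\| < \delta_m,
\]
and define
\[
A_m := \varepsilon_m H + \sum_{i=1}^{k}(q_i^{(m)}-r_i)\,C_i.
\]

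Then $A_m$ is an $\R$-divisor whose class is ample by construction, and
\[
D + A_m = \varepsilon_m H + \sum_{i=1}^{k} q_i^{(m)}\,C_i
\]
is a $\Q$-linear combination of integral Cartier divisors, hence a $\Q$-divisor. Finally,
\[
\|A_m\| \leq \varepsilon_m\,\|H\| + \Bigl\|\sum_{i=1}^{k}(q_i^{(m)}-r_i)C_i\Bigr\| < \tfrac{\|H\|}{m} + \delta_m,
\]
and shrinking $\delta_m$ if necessary so that $\delta_m \to 0$ (which we are free to do), we obtain $\|A_m\|\to 0$, as required.

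There is no real obstacle here; the only point worth noting is the use of the openness of the ample cone in $N^{1}(X)_\R$ to absorb the rational perturbation into the ample $\Q$-divisor $\varepsilon_m H$ while keeping the total ample.
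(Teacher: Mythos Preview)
Your proof is correct and follows essentially the same idea as the paper's: both use rational approximation of the real coefficients of $D$ together with the openness of the ample cone. The paper phrases it slightly differently---first choosing a single ample $\R$-divisor $B$ with $D+B$ a $\Q$-divisor, then approximating $B$ by $\Q$-divisors $B_m$ with $B-B_m$ ample and setting $A_m:=B-B_m$---but the content is the same.
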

\begin{proof}
Let $B$ be an ample $\mathbb{R}$-divisor such that $D+B$ is a $\mathbb{Q}$-divisor. By perturbing the coefficients of $B$ we can produce a sequence of $\mathbb{Q}$-divisors $B_m$ such that $B-B_m$ is ample and $\|B-B_m\| \rightarrow 0$. Set $A_m:=B-B_m$ and the statement follows.
%
%
 \end{proof}

\subsection{Asymptotic orders of vanishing}

\begin{definition} \label{asymptotic} (cf. \cite{Nakayama} and \cite[Definition 2.2]{ELMNP}). 
Let $X$ be a normal projective variety and let $D$ be a divisor such that $\kappa(X,D)\geq0$. Let $v$ be a geometric valuation on $X$.
The \textit{asymptotic order of vanishing} of $D$ along $v$ is
$$v(\| D\|):=\lim_{p \rightarrow \infty} \frac{v(|pD|)}{p},$$
where the limit is taken over sufficiently divisible integers $p$'s and $v(|pD|):=v(D')$, where $D'$ is general in $|pD|$. 
\end{definition}

\begin{remark}
Notice that $v(\| \cdot \|)$ is homogeneous and convex: for every $D,D'$ of non-negative Kodaira dimension, for every $k \in \mathbb{N}$,
\begin{align}
 &v(\|kD\|) =kv(\|D\|), \tag{homogeneity}\\
&v(\|D+D'\|) \leq v(\|D\|)+v(\|D'\|) \tag{convexity}.
\end{align} See \cite[Remark 2.3, Proposition 2.4]{ELMNP}.

In particular, by homogeneity, the above definition can be generalized to $\mathbb{Q}$-divisors and it can be easily seen that the limit is also the inf,
so that for every $\Q$-divisor $D$ we have that
$$v(\| D\|)=\inf\{v(E) \mid E \in |D|_\Q \}.$$
\end{remark}

\begin{remark} \label{casobig}
If $D$ is a big $\mathbb{Q}$-divisor, then $$v(\|D\|)=v_{\mathrm{num}}(D)=\inf\{v(E) \mid E \in |D|_\mathbb{R}\}$$ (see \cite[Lemma III.1.4]{Nakayama}, \cite[Lemma 3.3]{ELMNP}).
More generally, when $D$ is \textit{abundant}
we have that $v(\|D\|)=v_{\mathrm{num}}(D)$
by \cite[Proposition 6.4]{Lehmann}.

See \cite[Definition V.2.23]{Nakayama} for the definition of abundant divisor, see \cite{Lehmann2} and \cite{Lehmann} for some equivalent definitions and remarks.

In general, if $D$ is an effective $\mathbb{Q}$-divisor, then by definition $v(\|D\|) \geq v_{\text{num}}(D)$
but equality does not always hold.
Take for example a nef irreducible curve $D$ on a smooth surface as in \cite[Example 1]{R} and set $v:=\ord_D$.
We have that $v_{\textrm{num}}(D)=0$ by the nefness of $D$, while $v(\|D\|)= 1$.
\end{remark}

Recall that, given a nef divisor $D$ on a normal projective variety $X$, we can define its \textit{numerical dimension} as
$\nu(X,D):=\max\{k \in \mathbb{N}:D^k \not \equiv 0\}.$ If $D$ is nef, we have that it is \textit{abundant}
if and only if the Kodaira dimension of $D$ equals its numerical dimension (see \cite[Proposition V.2.22(5)]{Nakayama}).

In this paper we will only deal with abundance of nef divisors.
The following lemma is a translation of \cite[Theorem 1]{R} in terms of discrete valuations:

\begin{lemma}\label{Russo_valuations}
Let $D$ be a divisor on a projective normal variety $X$.
Then $D$ is nef and abundant if and only if $v(\|D\|)=0$ for every geometric valuation $v$ on $\mathbb{C}(X)$.
\end{lemma}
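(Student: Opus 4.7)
The plan is to handle the two directions separately, with the substantive work in the converse. For the forward direction I would combine two facts from Remark \ref{casobig}: abundance gives $v(\|D\|) = v_{\num}(D)$ for every geometric valuation $v$, while nefness of $D$ forces $v_{\num}(D) = 0$ for all $v$, and together they yield $v(\|D\|) = 0$.

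For the converse, assume $v(\|D\|) = 0$ for every geometric valuation $v$. Nefness comes for free from the inequality $v(\|D\|) \geq v_{\num}(D)$ recorded in Remark \ref{casobig}: it forces $v_{\num} \equiv 0$ and hence $\NNef(D) = \emptyset$. For abundance I would reduce to the smooth case. Pick a resolution $\pi \colon Y \to X$; by normality of $X$, $\pi_*\mathcal{O}_Y = \mathcal{O}_X$, so the pullback induces bijections $|pD| \simeq |p\pi^*D|$, which give $v(\|\pi^*D\|) = v(\|D\|) = 0$ for every geometric valuation $v$ on $\C(X) = \C(Y)$. Nefness, Kodaira dimension and numerical dimension are all stable under $\pi^*$, so it suffices to prove that the nef divisor $\pi^*D$ is abundant on the smooth variety $Y$.

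On smooth $Y$, F.~Russo's main theorem in \cite{R} characterizes nef and abundant divisors as those with $\mathcal{J}((Y,0); \|m\pi^*D\|) = \mathcal{O}_Y$ for every $m \geq 1$, so I would verify this triviality via the standard log-resolution formula. Fix $m$, choose a sufficiently divisible $p$ and a log resolution $\mu \colon Z \to Y$ of $|pm\pi^*D|$, and decompose $\mu^*|pm\pi^*D| = |W_p| + F_p$. Then $\mathcal{J}((Y,0); \tfrac{1}{p}|pm\pi^*D|) = \mathcal{O}_Y$ as soon as $\tfrac{1}{p}\ord_E(F_p) < 1 + a(E,Y,0)$ for every prime divisor $E$ on $Z$; each such quotient converges to $m\cdot\ord_E(\|\pi^*D\|) = 0$ as $p\to\infty$, and since $Z$ carries only finitely many prime divisors the inequality can be secured for every $E$ simultaneously by choosing $p$ large. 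The triviality then lifts to the asymptotic multiplier ideal by maximality, and Russo's theorem concludes. The only real subtlety is this simultaneous control; the preparatory reduction via $\pi$ is routine and rests on the birational invariance of asymptotic orders, $\kappa$ and $\nu$.
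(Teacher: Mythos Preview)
Your forward direction takes a different route from the paper. You invoke Remark~\ref{casobig} (Lehmann's result that $v(\|D\|)=v_{\num}(D)$ whenever $D$ is abundant) together with $v_{\num}\equiv 0$ for nef divisors. The paper instead uses the Kawamata/Mori--Russo structure theorem: a nef and abundant $D$ admits a smooth birational model $f:Z\to X$ on which $mk_0f^*D=B_m+N$ with $B_m$ semiample and $N$ effective and fixed; convexity and homogeneity then give $v(\|f^*D\|)\le\frac{1}{mk_0}v(\|N\|)\to 0$. Your shortcut is legitimate but leans on a deeper external input; the paper's computation is more self-contained.

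For the converse both you and the paper pull back to a resolution $\pi:Y\to X$ and reduce to smooth $Y$. The paper then observes that the vanishing of all $v(\|\pi^*D\|)$ is Russo's ``almost base point free'' condition and quotes \cite[Theorem~1]{R} directly. You instead aim for Russo's multiplier-ideal characterization \cite[Theorem~2]{R} and try to verify $\mathcal J((Y,0);\|m\pi^*D\|)=\mathcal O_Y$ by hand on a log resolution. This is where your argument breaks. The log resolution $\mu:Z\to Y$ of $|pm\pi^*D|$ depends on $p$, so the finite collection of relevant prime divisors $E$ (those supporting $F_p$ or $\mu$-exceptional) changes with $p$; there is no fixed finite set over which you can ``choose $p$ large'' to force $\tfrac{1}{p}\ord_E(F_p)<1+a(E,Y,0)$ simultaneously. (And of course a positive-dimensional $Z$ never has only finitely many prime divisors.)

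The repair is to argue on $Y$ rather than on $Z$, one point at a time. For each $y\in Y$ the hypothesis gives $\mult_y(\|\pi^*D\|)=0$, so some $p_y$ makes $\frac{1}{p_y}\mult_y(D_{p_y})<1$ for general $D_{p_y}\in|p_ym\pi^*D|$, and Theorem~\ref{thm:mult} yields $\mathcal J((Y,0);\tfrac{1}{p_y}|p_ym\pi^*D|)_y=\mathcal O_{Y,y}$. Since the asymptotic ideal contains each of these by maximality, $\mathcal J((Y,0);\|m\pi^*D\|)_y=\mathcal O_{Y,y}$ for every $y$---no single global $p$ is ever needed. This is precisely the mechanism of Proposition~\ref{spariscedelta} with $\D=0$; once you see that, invoking \cite[Theorem~1]{R} as the paper does is simply the shorter path to the same conclusion.
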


\begin{proof}
If $D$ is nef and abundant, then by \cite[Lemma 1]{MR} (see also \cite[Proposition 2.1]{Kawamata})
there exist a birational morphism $f:Z\rightarrow X$, where $Z$ is a smooth projective variety, an integer $k_0>0$
and an effective divisor $N$ on $Z$ such that $B_m:=mk_0f^*(D)-N$ is semiample for every $m\in \N$.

Now, given any geometric valuation $v$ on $\C(X)$, $v(\|D\|)=v(\|f^*D\|)$. 
For every $m$, by the homogeneity and convexity of the asymptotic order of vanishing,
$$v(\|f^*D\|)=\frac{1}{mk_0} v(\|f^*(mk_0 D)\|) \leq \frac{1}{mk_0} (v(\|B_m\|)+v(\|N\|)),$$
thus $v(\|f^*D\|)=0$ because $v(\|B_m\|)=0$ by the semiampleness of $B_m$ and $v(\| N \|)$ does not depend on $m$.

Now, suppose $v(\|D\|)=0$ for every geometric valuation $v$ on $\mathbb{C}(X)$.
If $\mu:X'\rightarrow X$ is a desingularization of $X$, then $v(\|\mu^*D\|)=v(\|D\|)=0$ for every $v$,
which implies that $\mu^*D$ is almost base point free (cf. \cite[Definition 1]{R}).
By \cite[Theorem 1]{R} this proves that $\mu^*D$ is nef and abundant, which in turn implies that $D$ is nef and abundant.
\end{proof}

The previous lemma justifies the following definition:

\begin{definition}\label{gigetto}
Let $X$ be a normal projective variety and let $D$ be a divisor such that $\kappa(X,D)\geq0$.
The \emph{non nef-abundant locus} of $D$ is $$\mathrm{NNA}(D):=\bigcup_{v \in V} \{c_X(v) \mid v(\|D\|)>0\},$$
where $V$ is the set of all geometric valuations on $\mathbb{C}(X)$ and, for any $v \in V$, $c_X(v)$ is the center of $v$
on $X$.
\end{definition}

\begin{remark} \label{nnaennef}
Trivially, by Lemma \ref{Russo_valuations}, $\mathrm{NNA}(D)=\emptyset$ if and only if $D$ is nef and abundant.
When $D$ is a big divisor, $\mathrm{NNef}(D)=\mathrm{NNA}(D)$
by Remark \ref{casobig}. By the same remark we see that if $D$ is effective, then in general $\mathrm{NNA}(D) \supseteq \mathrm{NNef}(D)$
but if $D$ is not big, then equality does not always hold.
\end{remark}

\subsection{Birational maps} Since we want to compare asymptotic base loci and zeroes of multiplier ideals on singular varieties, the first thing to do is to reduce ourselves to a convenient desingularization. The following three lemmas will be used later on for such a purpose:
\begin{lemma} \label{immagine nna}
Let $X$ be a normal projective variety and let $D$ be a divisor such that $\kappa(X,D)\geq0$.
Let $f:X' \rightarrow X$ be any birational morphism from a normal variety $X'$. Then $f(\mathrm{NNA}(f^*D))=\mathrm{NNA}(D)$.
\end{lemma}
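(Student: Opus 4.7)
The plan is to reduce the equality to two compatibilities between $X$ and $X'$: first, that the asymptotic order of vanishing along any geometric valuation is unchanged under pullback, i.e.\ $v(\|f^*D\|)=v(\|D\|)$; and second, that centers of valuations transform by $c_X(v)=f(c_{X'}(v))$. Once both are in hand, the result follows by taking the union of centers over the (common) set of geometric valuations $v$ with $v(\|D\|)>0$.

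For the first step, since $f$ is birational and $X,X'$ are normal, $f$ induces an isomorphism $\mathbb{C}(X')\xrightarrow{\sim}\mathbb{C}(X)$, so the sets of geometric valuations on the two function fields are identified. Normality of $X$ gives $f_*\mathcal{O}_{X'}=\mathcal{O}_X$, and the projection formula then yields $f_*\mathcal{O}_{X'}(mf^*D)=\mathcal{O}_X(mD)$ for every $m\in\mathbb{Z}$. In particular, pullback induces a bijection $H^0(X,\mathcal{O}_X(mD))\xrightarrow{\sim} H^0(X',\mathcal{O}_{X'}(mf^*D))$, so $\kappa(X',f^*D)=\kappa(X,D)\geq 0$ and every effective divisor in $|mf^*D|$ is of the form $f^*D_0$ for a unique $D_0\in|mD|$. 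Because a local equation for $D_0$ and for $f^*D_0$ is the same element of $\mathbb{C}(X)=\mathbb{C}(X')$, for any geometric valuation $v$ one has $v(f^*D_0)=v(D_0)$. Hence $v(|mf^*D|)=v(|mD|)$ for all sufficiently divisible $m$, and dividing by $m$ and passing to the limit gives $v(\|f^*D\|)=v(\|D\|)$.

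For the second step, given a geometric valuation $v$ with valuation ring $\mathcal{O}_v\subset\mathbb{C}(X)=\mathbb{C}(X')$, the valuative criterion of properness applied to $f$ shows that the canonical morphism $\Spec\mathcal{O}_v\to X$ lifts uniquely to a morphism $\Spec\mathcal{O}_v\to X'$ through $f$. Taking images of the closed point of $\Spec\mathcal{O}_v$ and using that $f$ is proper (so that $f(c_{X'}(v))$ is closed) yields $c_X(v)=f(c_{X'}(v))$.

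Putting everything together,
$$\mathrm{NNA}(f^*D)=\bigcup_{v(\|f^*D\|)>0} c_{X'}(v)=\bigcup_{v(\|D\|)>0} c_{X'}(v),$$
and applying $f$ commutes with the union, giving
$$f(\mathrm{NNA}(f^*D))=\bigcup_{v(\|D\|)>0} f(c_{X'}(v))=\bigcup_{v(\|D\|)>0} c_X(v)=\mathrm{NNA}(D).$$
The only potentially delicate point is the identification $v(f^*D_0)=v(D_0)$ together with the bijection of linear systems; both ultimately rest on $f_*\mathcal{O}_{X'}=\mathcal{O}_X$, i.e.\ on the normality of $X$. Everything else unwinds directly from the definitions.
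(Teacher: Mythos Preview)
Your argument is correct and follows exactly the approach the paper indicates: the paper's proof is the single sentence ``the lemma follows from the easy fact that, for any geometric valuation $v$ on $\mathbb{C}(X)$, $v(\|f^*D\|)=v(\|D\|)$,'' and you have simply unpacked this fact (via $f_*\mathcal{O}_{X'}=\mathcal{O}_X$ and the projection formula) together with the compatibility $c_X(v)=f(c_{X'}(v))$ of centers that the paper leaves implicit. The only point worth flagging is that your use of the valuative criterion for the center on $X'$, and of closedness for $f(c_{X'}(v))=c_X(v)$, tacitly assumes $f$ is proper; the paper's statement does not say this, but every application in the paper has $X'$ a log-resolution, hence projective, so this is harmless.
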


\begin{proof}
The lemma follows from the easy fact that, for any geometric valuation $v$ on $\mathbb{C}(X)$, $v(\|f^*D\|)=v(\|D\|)$.
\end{proof}
\begin{lemma}\label{immagine ideale}
Let $X,Y$ be normal varieties and let $f: Y \rightarrow X$ be a birational morphism. Let $\mathcal{J}$ be a coherent sheaf of ideals on $Y$.
Then $\mathcal{Z}(f_*\mathcal{J}) \subseteq f(\mathcal{Z}(\mathcal{J}))$.
\end{lemma}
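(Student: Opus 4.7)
The plan is to prove the contrapositive: given a point $x\in X$ with $x \notin f(\mathcal{Z}(\mathcal{J}))$, I will exhibit a Zariski open neighbourhood $U$ of $x$ on which $f_*\mathcal{J}$ coincides with $\mathcal{O}_X$, which forces $x \notin \mathcal{Z}(f_*\mathcal{J})$.

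First I would invoke properness of $f$. Since the paper works with projective normal varieties and the birational maps used are resolutions, $f$ is proper, and hence $f(\mathcal{Z}(\mathcal{J}))$ is a closed subset of $X$. Thus, for $x$ outside this image, I can pick an affine open neighbourhood $U \subseteq X$ of $x$ with $U \cap f(\mathcal{Z}(\mathcal{J})) = \emptyset$. This gives the key set-theoretic fact $f^{-1}(U) \cap \mathcal{Z}(\mathcal{J}) = \emptyset$, because $\mathcal{Z}(\mathcal{J}) \subseteq f^{-1}(f(\mathcal{Z}(\mathcal{J})))$.

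The next step is to turn this into a statement about sheaves: since $\mathcal{Z}(\mathcal{J})$ is precisely the support of $\mathcal{O}_Y/\mathcal{J}$, the vanishing of this support on $f^{-1}(U)$ means that $\mathcal{J}|_{f^{-1}(U)} = \mathcal{O}_{f^{-1}(U)}$. Taking direct images, $(f_*\mathcal{J})|_U = (f_*\mathcal{O}_Y)|_U$. Finally, since $f$ is a proper birational morphism between normal varieties, Zariski's Main Theorem gives $f_*\mathcal{O}_Y = \mathcal{O}_X$, and therefore $(f_*\mathcal{J})|_U = \mathcal{O}_U$. In particular $(f_*\mathcal{J})_x = \mathcal{O}_{X,x}$, so $x \notin \mathcal{Z}(f_*\mathcal{J})$, which is exactly what we need.

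There is no real obstacle here; the only point to be careful about is the appeal to properness (to ensure that $f(\mathcal{Z}(\mathcal{J}))$ is closed, so that an honest open neighbourhood $U$ of $x$ with $f^{-1}(U) \cap \mathcal{Z}(\mathcal{J}) = \emptyset$ actually exists) and the appeal to normality (to conclude $f_*\mathcal{O}_Y = \mathcal{O}_X$). Both hypotheses are explicit in the statement together with the intended applications, so the argument goes through cleanly.
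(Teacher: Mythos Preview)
Your proof is correct and follows essentially the same route as the paper: show that on any open $V' \subseteq X$ disjoint from $f(\mathcal{Z}(\mathcal{J}))$ one has $(f_*\mathcal{J})(V') = \mathcal{J}(f^{-1}(V')) = \mathcal{O}_Y(f^{-1}(V')) = \mathcal{O}_X(V')$ via Zariski's Main Theorem, and conclude stalkwise. The only difference is that you make explicit the appeal to properness (both to guarantee that $f(\mathcal{Z}(\mathcal{J}))$ is closed and that $f_*\mathcal{O}_Y = \mathcal{O}_X$), whereas the paper leaves this implicit.
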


\begin{proof}
Let $W:=Y \setminus \mathcal{Z}(\mathcal{J})$ and $V:=X \setminus f(\mathcal{Z}(\mathcal{J}))$.
For every $V' \subseteq V$ open subset of $X$ we have that $f^{-1}(V') \subseteq W$,
hence $$(f_*\mathcal{J})(V')=\mathcal{J}(f^{-1}(V'))=\mathcal{O}_Y(f^{-1}(V'))=f_*\mathcal{O}_Y(V') \cong \mathcal{O}_X(V')$$
by Zariski's main theorem. 
Therefore if $x \not \in f(\mathcal{Z}(\mathcal{J}))$, then $(f_*\mathcal{J})_x = \mathcal{O}_{X,x}$, i.e., $x \not \in \mathcal{Z}(f_*\mathcal{J})$ and we are done.
\end{proof}

\begin{lemma}[Birational transformation rule for asymptotic multiplier ideals] \label{transformation rule} Let $(X,\Delta_X)$, $(Y,\Delta_Y)$ be pairs and let $f:Y \rightarrow X$ be a birational morphism.
Assume that $$K_Y+\Delta_Y \equiv f^*(K_X+\Delta_X) \textrm{ and } f_*\Delta_Y=\Delta_X.$$
If $D$ a divisor on $X$ such that $\kappa(X,D)\geq 0$, then $$\mathcal{J}((X,\Delta_X);\|D\|)=f_*\left(\mathcal{J}((Y,\Delta_Y);\|f^*D\|)\right).$$
\end{lemma}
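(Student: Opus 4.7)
The plan is to reduce the asymptotic statement to the finite (non-asymptotic) version and then verify the finite version on a common log resolution. More precisely, the family $\{\mathcal{J}((X,\Delta_X);\frac{1}{p}|pD|)\}_p$ is increasing in $p$ (with respect to divisibility) and by noetherianity it stabilizes at some $p_1$ where the asymptotic multiplier ideal is attained; similarly on $Y$ it stabilizes at some $p_0$. If we can show, for every sufficiently divisible $p$, the finite-level identity
\[
\mathcal{J}\bigl((X,\Delta_X);\tfrac{1}{p}|pD|\bigr)=f_*\,\mathcal{J}\bigl((Y,\Delta_Y);\tfrac{1}{p}|pf^*D|\bigr),
\]
then picking a $p$ divisible by both $p_0$ and $p_1$ gives the asymptotic identity.

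For the finite-level identity, first I would upgrade the numerical assumption $K_Y+\Delta_Y\equiv f^*(K_X+\Delta_X)$ to an actual equality of $\Q$-Weil divisors. The divisor $R:=K_Y+\Delta_Y-f^*(K_X+\Delta_X)$ is numerically trivial, hence $f$-numerically trivial; moreover $f_*R=f_*(K_Y+\Delta_Y)-(K_X+\Delta_X)=K_X+f_*\Delta_Y-(K_X+\Delta_X)=0$ using $f_*\Delta_Y=\Delta_X$, so $R$ is $f$-exceptional. The Negativity Lemma applied to $R$ and $-R$ then forces $R=0$. Consequently, on any common log resolution $\pi:Z\to Y$ with $\mu:=f\circ\pi:Z\to X$ also a log resolution, one has
\[
\mu^*(K_X+\Delta_X)=\pi^*f^*(K_X+\Delta_X)=\pi^*(K_Y+\Delta_Y),
\]
which, upon comparing with the two discrepancy expansions of $K_Z$, yields $a(E,X,\Delta_X)=a(E,Y,\Delta_Y)$ for every prime divisor $E$ on $Z$.

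Next I would compare the base-divisor contributions. Since $f$ is birational and $X,Y$ are normal, $f_*\mathcal{O}_Y=\mathcal{O}_X$, and the projection formula gives $H^0(Y,\mathcal{O}_Y(pf^*D))=H^0(X,\mathcal{O}_X(pD))$, so the linear series $|pf^*D|$ is the pullback of $|pD|$ and its fixed divisor on $Y$ is $f^*$ of the fixed divisor of $|pD|$ on $X$. Enlarging $Z$ if necessary so that $\mu$ resolves $|pD|$ and $\pi$ resolves $|pf^*D|$, the corresponding "$d(E)$" coefficients coincide on $Z$. Combining with the equality of discrepancies, the rounded-up divisors on $Z$ computing the two multiplier ideals are identical; applying $\pi_*$ then $f_*$ versus applying $\mu_*$ directly gives
\[
f_*\,\mathcal{J}\bigl((Y,\Delta_Y);\tfrac{1}{p}|pf^*D|\bigr)=\mu_*\mathcal{O}_Z\!\left(\textstyle\sum\lceil a(E,X,\Delta_X)+\tfrac{1}{p}d(E)\rceil E\right)=\mathcal{J}\bigl((X,\Delta_X);\tfrac{1}{p}|pD|\bigr),
\]
which is the desired finite-level identity.

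The main subtlety is the passage from the numerical hypothesis to the strict equality $K_Y+\Delta_Y=f^*(K_X+\Delta_X)$, which is what makes the discrepancies (and not merely the rounded-up numerical classes) match; everything else is a bookkeeping exercise on the common log resolution. Once the finite version is in hand, the asymptotic identity follows immediately by taking $p$ divisible enough to realise the maxima on both sides simultaneously.
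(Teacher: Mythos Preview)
Your argument is correct and mirrors the paper's approach: both reduce the asymptotic identity to a finite-level one (the paper by choosing a single sufficiently large and divisible $k$ and a general $D_k\in|kD|$ computing both sides, you by choosing $p$ divisible by the stabilizing integers on $X$ and $Y$) and then invoke the ordinary birational transformation rule for multiplier ideals, which the paper simply cites as \cite[Proposition~9.3.62]{LazII} while you unpack it directly on a common log resolution. Your Negativity-Lemma step upgrading the numerical hypothesis $K_Y+\Delta_Y\equiv f^*(K_X+\Delta_X)$ to an actual equality of divisors is a detail the paper leaves implicit, but it is indeed what makes the cited transformation rule applicable, so the extra care is warranted.
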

\begin{proof}
We can choose a sufficiently large and divisible $k\in \N$ and a  general $D_k \in |kD|$ such that
\begin{align*}
&\mathcal{J}((X,\Delta_X);\|D\|)  =  \mathcal{J}((X,\Delta_X);\frac{1}{k}D_k), \\
&\mathcal{J}((Y,\Delta_Y);\|f^*D\|)  = \mathcal{J}((Y,\Delta_Y);\frac{1}{k}f^*(D_k)).
\end{align*}
Then the statement follows by the usual birational transformation rule for multiplier ideals (\cite[Proposition 9.3.62]{LazII}).
%
\end{proof}

\section{Some special cases}

In this section we investigate the relationship between $\mathbf{B}_-(D)$ and $\NNef(D)$ just exploiting the fact that, by Theorem \ref{thm:casoliscio}, we already know that they are equal on smooth varieties. After a few lemmas about the behavior of the restricted base locus under birational maps, we prove that $\mathbf{B}_-(D)$ and $\NNef(D)$ agree on the smooth locus of $X$. Some considerations will then allow us to conclude that $\mathbf{B}_-(D)=\NNef(D)$ on any normal surface.





\begin{lemma} \label{b+}
Let $X$ and $Y$ be normal projective varieties and let $f: Y \rightarrow X$ be a birational morphism.
If $A$ is an ample $\mathbb{R}$-divisor on $X$, then $\mathbf{B}_+(f^*A)=\mathrm{Exc}(f)$.
\end{lemma}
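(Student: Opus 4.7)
My plan is to establish the two inclusions $\mathrm{Exc}(f) \subseteq \mathbf{B}_+(f^*A)$ and $\mathbf{B}_+(f^*A) \subseteq \mathrm{Exc}(f)$ separately. Both reduce to the basic definition of $\mathbf{B}_+$: what makes things work is that $f^*A$ is numerically trivial on curves contracted by $f$, and that $f$ admits an effective exceptional divisor which is $f$-antiample.

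For $\mathrm{Exc}(f) \subseteq \mathbf{B}_+(f^*A)$, I would pick $y \in \mathrm{Exc}(f)$ and first produce a proper curve through $y$ contracted by $f$. Since $X$ is normal and $f$ is projective and birational, Zariski's main theorem forces the connected fiber $f^{-1}(f(y))$ to have positive dimension (otherwise $f$ would be an isomorphism at $y$), so cutting with hyperplane sections yields a proper irreducible curve $C \subseteq f^{-1}(f(y))$ containing $y$. Now for any effective $\mathbb{R}$-divisor $E$ with $f^*A - E$ ample, ampleness gives $(f^*A - E) \cdot C > 0$, while $f^*A \cdot C = 0$ because $C$ is contracted; hence $E \cdot C < 0$, which forces $C \subseteq \mathrm{Supp}(E)$, and in particular $y \in \mathrm{Supp}(E)$. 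Since this is true for every admissible $E$, we get $y \in \mathbf{B}_+(f^*A)$.

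For the reverse inclusion I would invoke the standard fact that a projective birational morphism $f : Y \to X$ between normal varieties admits an effective Cartier divisor $F$ on $Y$, with $\mathrm{Supp}(F) \subseteq \mathrm{Exc}(f)$, such that $-F$ is $f$-ample (this follows from the observation that any projective birational morphism is the blow-up of a suitable coherent ideal sheaf on $X$, with relatively antiample exceptional Cartier divisor; one can also cite e.g.\ \cite[Lemma 2.62]{Kollar}). Combining the $f$-ampleness of $-F$ with the ampleness of $A$ on $X$, a standard criterion for relative ampleness yields that $mf^*A - F$ is ample on $Y$ for $m \gg 0$, hence $f^*A - \tfrac{1}{m}F$ is an ample $\mathbb{R}$-divisor. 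Since $\tfrac{1}{m}F$ is effective and its support is contained in $\mathrm{Exc}(f)$, the definition of $\mathbf{B}_+$ gives $\mathbf{B}_+(f^*A) \subseteq \mathrm{Supp}(\tfrac{1}{m}F) \subseteq \mathrm{Exc}(f)$, which is exactly what we need. The only slightly delicate ingredient in the whole argument is the existence of the $f$-antiample exceptional divisor $F$; everything else is direct intersection-theoretic bookkeeping. A less elementary but very clean alternative would be to apply Nakamaye's theorem to the big and nef divisor $f^*A$, whose null locus coincides with $\mathrm{Exc}(f)$, but the argument above avoids that deeper input.
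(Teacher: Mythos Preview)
Your argument is correct, and it is considerably more informative than the paper's own proof, which consists solely of the citation ``See \cite[Proposition 1.5]{BBP}''. So there is nothing to compare at the level of strategy: you have supplied a self-contained proof of the very statement that the paper outsources.

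A couple of minor remarks on your write-up. For the inclusion $\mathrm{Exc}(f)\subseteq\mathbf{B}_+(f^*A)$ you should perhaps note at the outset that $f^*A$ is big (pullback of an ample divisor under a birational morphism), so that the defining intersection for $\mathbf{B}_+$ is non-vacuous under the paper's convention; otherwise the argument with the contracted curve $C$ is exactly right. For the reverse inclusion, the only point that deserves a word of caution is the one you already flag: the existence of an effective Cartier divisor $F$ with $-F$ $f$-ample and $\mathrm{Supp}(F)\subseteq\mathrm{Exc}(f)$. The blow-up description gives an effective Cartier $F$ with $-F$ $f$-ample, but one must choose the ideal so that its cosupport is exactly $f(\mathrm{Exc}(f))$; this can always be arranged (and is implicit in the references you cite), but it is worth saying explicitly, since a careless choice of ideal could produce an $F$ whose support spills outside $\mathrm{Exc}(f)$. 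Once that is in hand, your use of the relative-ampleness criterion to conclude that $mf^*A-F$ is ample for $m\gg 0$, and hence $\mathbf{B}_+(f^*A)\subseteq\mathrm{Supp}(F)\subseteq\mathrm{Exc}(f)$, is clean and correct. Your closing remark that Nakamaye's theorem gives an alternative route (identifying $\mathbf{B}_+(f^*A)$ with the null locus of the big and nef divisor $f^*A$) is also apt, and is in fact closer in spirit to how \cite{BBP} treats the statement.
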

\begin{proof}
See \cite[Proposition 1.5]{BBP}.
\end{proof}

\begin{lemma} \label{b-}
Let $X$ and $Y$ be normal projective varieties and let $f: Y \rightarrow X$ be a birational morphism.
If $D$ is an $\mathbb{R}$-divisor on $X$, then  $ f^{-1}(\mathbf{B}_-(D)) \setminus \exc(f) \subseteq  \mathbf{B}_-(f^*D)$.
\end{lemma}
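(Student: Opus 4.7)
The plan is to take $y \in f^{-1}(\mathbf{B}_-(D)) \setminus \exc(f)$ and produce an ample $\R$-divisor $A''$ on $Y$ with $y \in \mathbf{B}(f^*D + A'')$, which by definition forces $y \in \mathbf{B}_-(f^*D)$. Since $f(y) \in \mathbf{B}_-(D)$, I fix at the outset an ample $\R$-divisor $A$ on $X$ with $f(y) \in \mathbf{B}(D+A)$. The proof splits into two steps: first I lift to $Y$ by showing $y \in \mathbf{B}(f^*(D+A))$; then I use Lemma \ref{b+} to replace the nef-but-not-ample pullback $f^*A$ by a genuine ample divisor whose support misses $y$.

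For the first step, suppose for contradiction that there is an effective $\R$-divisor $F \sim_\R f^*(D+A)$ with $y \notin \supp(F)$. The pushforward $f_*F$ kills the $f$-exceptional components of $F$ and preserves the others with their multiplicities, so $f_*F$ is effective; and because $\mathbb{C}(Y) = \mathbb{C}(X)$, pushforward sends principal divisors to principal divisors, giving $f_*F \sim_\R f_*f^*(D+A) = D+A$. Since $y \notin \exc(f)$, $f$ is a local isomorphism at $y$, so $F|_V = 0$ on a neighborhood $V$ of $y$ implies $(f_*F)|_{f(V)} = 0$; hence $f(y) \notin \supp(f_*F)$, contradicting $f(y) \in \mathbf{B}(D+A)$.

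For the second step, Lemma \ref{b+} identifies $\mathbf{B}_+(f^*A)$ with $\exc(f)$, so the hypothesis $y \notin \exc(f)$ supplies an effective $\R$-divisor $E$ on $Y$ with $A'' := f^*A - E$ ample and $y \notin \supp(E)$. Then $f^*(D+A) = f^*D + A'' + E$, and the elementary inclusion $\mathbf{B}(D_1 + D_2) \subseteq \mathbf{B}(D_1) \cup \supp(D_2)$ (valid whenever $D_2 \geq 0$) gives $y \in \mathbf{B}(f^*D + A'') \cup \supp(E)$; since $y \notin \supp(E)$, I conclude $y \in \mathbf{B}(f^*D + A'') \subseteq \mathbf{B}_-(f^*D)$. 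The main obstacle is the pushforward analysis in the first step: it demands that $f_*$ preserve both effectivity and $\R$-linear equivalence class, and the hypothesis $y \notin \exc(f)$ must be read (consistently with Lemma \ref{b+}) as saying that $f$ is an isomorphism at $y$, not merely that $y$ lies off the divisorial exceptional locus.
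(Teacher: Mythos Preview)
Your proof is correct and follows essentially the same route as the paper's: pick $y$, find an ample $A$ on $X$ with $f(y)\in\mathbf{B}(D+A)$, lift to $y\in\mathbf{B}(f^*(D+A))$, then use Lemma~\ref{b+} to write $f^*A=A''+E$ with $A''$ ample and $y\notin\supp(E)$, and conclude via $\mathbf{B}(f^*D+A''+E)\subseteq\mathbf{B}(f^*D+A'')\cup\supp(E)$. The only difference is that the paper asserts the lifting step ``Therefore $y\in\mathbf{B}(f^*D+f^*A_X)$'' without comment, whereas you supply the pushforward argument that justifies it; your care about $f_*$ preserving effectivity and $\R$-linear equivalence, and about reading $\exc(f)$ as the full non-isomorphism locus, is well placed.
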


\begin{proof}
Let $y \in f^{-1}(\mathbf{B}_-(D)) \setminus \mathrm{Exc}(f)$. The fact that $f(y) \in \mathbf{B}_-(D)$ implies, by definition, that there exists $A_X$, an ample $\mathbb{R}$-divisor on $X$, such that $f(y) \in \mathbf{B}(D+A_X)$.
Therefore $y \in \mathbf{B}(f^*D+f^*A_X)$.
By Lemma \ref{b+} there exist $A_Y$, an ample $\mathbb{R}$-divisor on $Y$, and $E_Y$,
an effective $\mathbb{R}$-divisor on $Y$, such that $f^*A_X=A_Y+E_Y$ and $y \not \in \supp(E_Y)$. Since $\mathbf{B}(f^*D+A_Y+E_Y) \subseteq \mathbf{B}(f^*D+A_Y) \cup \mathbf{B}(E_Y)$ and $ y \not \in \supp(E_Y)$, we have that $y \in \mathbf{B}(f^*D+A_Y) \subseteq \mathbf{B}_-(f^*D)$.
\end{proof}

We can now compare $\mathbf{B}_-(D)$ and $\mathrm{NNef}(D)$ on the smooth locus of $X$.
To this purpose define $X_{\mathrm{sm}}$ to be the \textit{smooth locus} of $X$, i.e., $X_{\mathrm{sm}}:=X \setminus \mathrm{Sing}(X)$.
The following holds:

\begin{proposition} \label{smooth}
Let $X$ be a normal projective variety and let $D$ be an $\mathbb{R}$-divisor on $X$.
We have that $\mathbf{B}_-(D) \cap X_{\mathrm{sm}} = \mathrm{NNef}(D) \cap X_{\mathrm{sm}}$.
\end{proposition}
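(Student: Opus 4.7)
I may assume $D$ is pseudoeffective, else both loci equal $X$ by convention. The easy containment $\NNef(D) \subseteq \B_-(D)$ holds on any normal projective variety: if $x \in \NNef(D)$ is witnessed by a geometric valuation $v$ with $v_{\num}(D)>0$ and $c_X(v) \ni x$, but $x \notin \B_-(D)$, then for each ample $\R$-divisor $A$ one can pick $E \in |D+A|_\R$ avoiding $x$. Since $c_X(v)$ is irreducible and contains $x$, its generic point lies outside $\supp(E)$, so $v(E)=0$ and hence $v_{\num}(D+A)=0$. Letting $A \to 0$ along an ample sequence, the definition of $v_{\num}$ on the pseudoeffective cone forces $v_{\num}(D)=0$, a contradiction.

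For the converse $\B_-(D) \cap X_{\mathrm{sm}} \subseteq \NNef(D) \cap X_{\mathrm{sm}}$, the plan is to reduce to the smooth-variety case via a suitable resolution. First I would use Lemmas \ref{bmeno} and \ref{successione} to reduce to $D$ being a big $\Q$-divisor: both loci decompose as unions over $m$ of the loci for the big $\Q$-divisors $D + A_m$. Then I would fix a resolution $\pi\colon Y \to X$ which is an isomorphism over $X_{\mathrm{sm}}$. For $x \in \B_-(D) \cap X_{\mathrm{sm}}$, let $y := \pi^{-1}(x) \in Y \setminus \exc(\pi)$; Lemma \ref{b-} gives $y \in \B_-(\pi^*D)$, and since $Y$ is smooth with $\pi^*D$ a big $\Q$-divisor, Theorem \ref{thm:casoliscio} yields $y \in \NNef(\pi^*D)$. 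This furnishes a geometric valuation $v$ on $\C(Y)=\C(X)$ with $c_Y(v) \ni y$ and $v_{\num}(\pi^*D) > 0$.

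Finally, to descend back to $X$, I would exploit the bigness of $D$: Remark \ref{casobig} identifies $v_{\num}(D) = v(\|D\|)$ and $v_{\num}(\pi^*D) = v(\|\pi^*D\|)$, and the birational invariance $v(\|\pi^*D\|) = v(\|D\|)$ (used already in the proof of Lemma \ref{immagine nna}) forces $v_{\num}(D) > 0$. As $c_X(v) = \pi(c_Y(v)) \ni x$, this gives $x \in \NNef(D)$. The delicate point is precisely the transfer of the valuation-theoretic information across $\exc(\pi)$: it works because $\pi$ is chosen to be an isomorphism exactly over $X_{\mathrm{sm}}$, so that $x$ and $y$ correspond unambiguously, and because in the big case $v_{\num}$ coincides with $v(\|\cdot\|)$, which is a manifestly birational invariant.
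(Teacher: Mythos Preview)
Your proof is correct and follows essentially the same strategy as the paper: pass to a resolution $\pi\colon Y\to X$ which is an isomorphism over $X_{\mathrm{sm}}$, lift points via Lemma \ref{b-}, apply the smooth case (Theorem \ref{thm:casoliscio}), and descend using the birational invariance of $\NNef$. The only differences are cosmetic: the paper cites \cite[Lemma 1.8]{BBP} for the easy inclusion and invokes $f(\NNef(f^*D))=\NNef(D)$ at the level of sets (via \cite[Lemma 1.6]{BBP}), while you argue pointwise and first reduce to big $\Q$-divisors to quote Theorem \ref{thm:casoliscio} in its stated form---the paper instead silently uses its extension to pseudoeffective $\R$-divisors noted just after that theorem.
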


\begin{proof}
In general $\mathrm{NNef}(D) \subseteq \mathbf{B}_-(D)$ (see \cite[Lemma 1.8]{BBP}), thus it is enough to show that
$\mathbf{B}_-(D) \cap X_{\mathrm{sm}} \subseteq \mathrm{NNef}(D)$.

Let $f:Y \rightarrow X$ be a resolution of the singularities of $X$ constructed as a series of blow-ups along smooth centers
contained in $\mathrm{Sing}(X)$ (this is possible by Hironaka's theorem - cf. \cite[Theorem 4.1.3]{LazI}).
By Lemma \ref{b-}, $f(\mathbf{B}_-(f^*D)) \supseteq \mathbf{B}_-(D) \cap X_{\mathrm{sm}}$.
Since $Y$ is smooth, then $\mathbf{B}_-(f^*D)=\mathrm{NNef}(f^*D)$ by Theorem \ref{thm:casoliscio},
therefore $$\mathrm{NNef}(D) = f(\mathrm{NNef}(f^*D))=f(\mathbf{B}_-(f^*D)) \supseteq \mathbf{B}_-(D) \cap X_{\mathrm{sm}},$$
where the first equality is a straightforward consequence of \cite[Lemma 1.6]{BBP} (or it easily follows from Lemma \ref{immagine nna}).
\end{proof}

Note that in the following section we will give a generalization of this result (see Corollary \ref{corb-ennef} and Remark \ref{Gongyo}).

Recall that, for every normal variety $X$ and $\mathbb{R}$-divisor $D$ on $X$, we have that
both $\mathbf{B}_-(D)$ and $\mathrm{NNef}(D)$ are at most a countable union of Zariski closed subsets of $X$ (see, for example, \cite{BBP}).
Therefore the following holds:

\begin{corollary}\label{divisorial}
Let $X$ be a normal projective variety and let $D$ be an $\mathbb{R}$-divisor on $X$. Then every divisorial component of $\mathbf{B}_-(D)$ is contained in $\mathrm{NNef}(D)$.
\end{corollary}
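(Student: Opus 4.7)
The plan is to reduce to the smooth locus of $X$, where by Proposition \ref{smooth} the two loci agree, and then use a Baire-type argument to promote a dense containment on a divisorial component to full containment.

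First, let $Z$ be an irreducible component of $\B_-(D)$ of codimension one. Since $X$ is normal, $\mathrm{Sing}(X)$ has codimension at least two in $X$, so $Z$ cannot be contained in $\mathrm{Sing}(X)$. Hence $U := Z \cap X_{\mathrm{sm}}$ is a nonempty open subset of $Z$, in particular dense and irreducible, with $\overline{U}=Z$.

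Next, Proposition \ref{smooth} gives
$$U \subseteq \B_-(D)\cap X_{\mathrm{sm}} \;=\; \NNef(D)\cap X_{\mathrm{sm}} \;\subseteq\; \NNef(D).$$
Now I would invoke the remark preceding the corollary: $\NNef(D)$ is a countable union of Zariski closed subsets, say $\NNef(D)=\bigcup_{i\in\N} C_i$. Then $U=\bigcup_{i\in\N} (U\cap C_i)$ expresses the irreducible complex variety $U$ as a countable union of relatively closed subsets. Since we work over $\C$ (uncountable), an irreducible complex variety cannot be written as a countable union of proper Zariski closed subsets, so there exists an index $i_0$ with $U\cap C_{i_0}=U$. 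Taking closures in $X$,
$$Z \;=\; \overline{U} \;\subseteq\; C_{i_0} \;\subseteq\; \NNef(D),$$
which is the desired inclusion.

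The only nontrivial ingredient is the countable-union fact in Step~3; everything else is a direct chain of inclusions. The hypothesis that the base field is $\C$ (or any uncountable field) is essential here, since without it the countable union of the $C_i$'s might conceivably cover $U$ without any single $C_{i_0}$ doing so. This is the same mechanism that, in the literature, reduces questions about $\B_-$ and $\NNef$ to arguments on a single component, and it is presumably why the authors singled out the divisorial case as an immediate consequence of Proposition \ref{smooth}.
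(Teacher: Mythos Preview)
Your proof is correct and follows essentially the same approach as the paper's: both restrict the divisorial component to the smooth locus, invoke Proposition \ref{smooth}, write $\NNef(D)$ as a countable union of closed subsets, and use a Baire-type argument to find a single member of the union containing the restricted component before taking closures. The only cosmetic difference is that the paper phrases the countable-union step as Baire's category theorem in the Euclidean topology, whereas you use the equivalent algebraic formulation that an irreducible variety over $\C$ is not a countable union of proper Zariski-closed subsets.
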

\begin{proof}
Let $E$ be a prime divisor on $X$ such that
$E \subseteq \mathbf{B}_-(D)$. Write $\mathrm{NNef}(D)=\bigcup_{i \in \mathbb{N}} V_i$, $V_i$ proper subvariety of $X$. By Proposition \ref{smooth},
$$E \cap X_{\mathrm{sm}} = \bigcup_{i \in \mathbb{N}} (E \cap X_{\mathrm{sm}} \cap V_i).$$ Since $\codim(\mathrm{Sing}(X)) \geq 2$, then $E \cap X_{\mathrm{sm}} \not = \emptyset$. Hence, by Baire's category theorem applied to $E \cap X_{\mathrm{sm}}$ (with the Euclidean topology), 
there must exist $j \in \mathbb{N}$ such that $E \cap X_{\mathrm{sm}} \subseteq V_j$. By taking closures we get that $E=V_j$.
\end{proof}


\begin{corollary}\label{superfici}
Let $X$ be a normal projective surface and let $D$ be an $\mathbb{R}$-divisor on $X$. Then $\mathbf{B}_-(D)=\mathrm{NNef}(D)$.
\end{corollary}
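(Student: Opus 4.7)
The inclusion $\mathrm{NNef}(D) \subseteq \mathbf{B}_-(D)$ is standard (cf.\ \cite[Lemma 1.8]{BBP}), so the plan is to establish the reverse inclusion $\mathbf{B}_-(D) \subseteq \mathrm{NNef}(D)$.

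Proposition \ref{smooth} already gives this inclusion on $X_{\mathrm{sm}}$, and Corollary \ref{divisorial} gives it on every divisorial component of $\mathbf{B}_-(D)$. Since $\dim X = 2$ forces $\mathrm{Sing}(X)$ to be a finite set of points, the set $\mathbf{B}_-(D)\setminus \mathrm{NNef}(D)$ is contained in the finite collection of singular points of $X$ that do not lie on any divisorial component of $\mathbf{B}_-(D)$. Fix such a $p$; the remaining task is to show $p \in \mathrm{NNef}(D)$.

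My approach for this case is to pass to a smooth resolution and exploit Zariski decomposition. By Lemmas \ref{bmeno} and \ref{successione}, I may choose an ample $\mathbb{Q}$-divisor $A$ such that $D + A$ is a big $\mathbb{Q}$-Cartier divisor with $p \in \mathbf{B}(D+A)$. Let $f \colon Y \to X$ be a smooth resolution of $X$ (possible since we are in characteristic zero); since $X$ is a surface, so is $Y$. Normality of $X$ together with birationality of $f$ yields $H^0(Y, kf^*(D+A)) = H^0(X, k(D+A))$ for $k$ clearing denominators, hence $\mathbf{B}(f^*(D+A)) = f^{-1}(\mathbf{B}(D+A)) \supseteq f^{-1}(p)$. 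The key surface-specific input is that on the smooth surface $Y$ any big $\mathbb{Q}$-divisor $L$ admits the classical Zariski decomposition $L = P + N_\sigma(L)$ with $P$ nef, big and (by Zariski's theorem on smooth surfaces) semiample, which yields the identity $\mathbf{B}(L) = \mathrm{Supp}(N_\sigma(L))$. Applying this with $L = f^*(D+A)$ and using Nakayama's monotonicity $N_\sigma(L+H) \leq N_\sigma(L)$ for $H$ nef (taken with $H = f^*A$) gives
$$f^{-1}(p) \subseteq \mathrm{Supp}(N_\sigma(f^*(D+A))) \subseteq \mathrm{Supp}(N_\sigma(f^*D)) = \mathbf{B}_-(f^*D) = \mathrm{NNef}(f^*D),$$
where the final two equalities hold on smooth $Y$ by Theorem \ref{thm:casoliscio} (extended to pseudoeffective $\mathbb{R}$-divisors as noted just after its statement). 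Finally, the identity $\mathrm{NNef}(D) = f(\mathrm{NNef}(f^*D))$ used in the proof of Proposition \ref{smooth} places $p$ in $\mathrm{NNef}(D)$.

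The main technical obstacle is the surface-specific identity $\mathbf{B}(L) = \mathrm{Supp}(N_\sigma(L))$ for big $L$; on smooth surfaces this follows from the fact that big nef divisors are semiample (classical Zariski), but it fails in higher dimensions. This is precisely why, in the absence of a KLT boundary, the coincidence $\mathbf{B}_- = \mathrm{NNef}$ is established here only for surfaces.
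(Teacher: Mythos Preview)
Your reduction to a singular point $p \in \mathbf{B}_-(D)$ lying on no curve component of $\mathbf{B}_-(D)$ is fine, but the argument you then give for this case has a genuine gap. The assertion that on a smooth surface the positive part $P$ of the Zariski decomposition of a big $\Q$-divisor is \emph{semiample} is false in general. What ``classical Zariski'' actually gives is that a divisor whose linear system has \emph{no fixed component} becomes base-point-free after passing to multiples; but $|mP|$ can very well have a fixed curve. Concretely, whenever the null locus of a big nef $\Q$-divisor $L$ contains an irreducible curve $C$ of positive genus with $L|_C$ a non-torsion degree-$0$ line bundle, every section of $mL$ vanishes along $C$, so $C\subseteq\mathbf{B}(L)$ while $N_\sigma(L)=0$. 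Such configurations exist (e.g.\ on suitable blow-ups of $\mathbb{P}^2$ at $\geq 10$ points on a cubic), so the identity $\mathbf{B}(L)=\mathrm{Supp}(N_\sigma(L))$ you rely on fails, and with it the inclusion $f^{-1}(p)\subseteq \mathrm{Supp}(N_\sigma(f^*(D+A)))$.

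The paper's route avoids this entirely and is much shorter: by \cite[Proposition~1.1]{ELMNPrestricted} the restricted base locus $\mathbf{B}_-(D)$ has \emph{no isolated points}, so on a surface it is a (countable) union of curves, and Corollary~\ref{divisorial} already gives $\mathbf{B}_-(D)\subseteq\mathrm{NNef}(D)$. In other words, the ``remaining case'' you isolate is vacuous; once you observe that, your first paragraph together with the no-isolated-points fact is a complete proof. What you are missing is precisely this input, and the Zariski-decomposition argument you substitute for it does not go through.
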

\begin{proof}
By \cite[Proposition 1.1]{ELMNPrestricted} it follows that $\mathbf{B}_-(D)$ has no isolated points.
Since $X$ is a surface, this is equivalent to saying that $\mathbf{B}_-(D)$ has only divisorial components,
so that we can conclude by Corollary \ref{divisorial}.
\end{proof}

\section{Main results}


In this section we will prove Theorem \ref{riassuntivo}; this is done in Theorem \ref{b-ennef}.
The idea is to prove that, given an effective KLT pair $(X,\Delta)$ and an effective integral divisor $D$, we have that

$$\B_-(D) \subseteq \bigcup_p \mathcal{Z}(\mathcal{J}((X,\Delta);\|pD\|)) \subseteq \mathrm{NNA}(D). $$
The former inclusion is a consequence of Nadel's vanishing theorem and
the proof is just an easy generalization to singular varieties of some arguments in \cite[Proposition 2.8]{ELMNP}.
This is the content of Lemma \ref{nadel}.


To prove the latter inclusion we notice that, by considering a suitable log-resolution, we can reduce
to the smooth case and get rid of the boundary $\D$ at the same time,
so that the result follows as an application of Theorem \ref{thm:mult} (see Proposition \ref{spariscedelta} and Theorem \ref{principale}).

The rest of the section is devoted to slight generalizations and corollaries.

\begin{lemma} \label{nadel}
Let $(X,\Delta)$ be an effective pair. Let $D$ be an integral divisor such that $\kappa(X,D) \geq 0$.
Then $\mathbf{B}_-(D) \subseteq \bigcup_p \mathcal{Z}(\mathcal{J}((X,\Delta);\|pD\|))$.
\end{lemma}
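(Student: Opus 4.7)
The plan is to prove the contrapositive: if $x \in X$ satisfies $\mathcal{J}((X,\Delta);\|pD\|)_x = \mathcal{O}_{X,x}$ for every $p$ with $|pD|\neq \emptyset$, then $x \notin \B_-(D)$. The strategy is the usual ELMNP argument --- produce sections of $pD+(\text{small ample})$ vanishing at no prescribed point away from the zero locus of an asymptotic multiplier ideal --- transported to the singular setting by using the multiplier ideal of the pair $(X,\Delta)$ and the KLT/pair version of Nadel vanishing (Theorem \ref{thm:nadel}).

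Fix such an $x$, let $n=\dim X$, pick a very ample integer divisor $H'$ on $X$, and let $H$ be an ample integer divisor such that $H-(K_X+\D)$ is ample (possible because $K_X+\D$ is $\Q$-Cartier). For every $i\in\{1,\dots,n\}$ the $\Q$-divisor $H+(n-i)H'-(K_X+\D)$ is ample, hence big and nef. Taking any positive integer $p$ with $|pD|\neq \emptyset$ and applying the asymptotic form of Nadel's theorem to $N:=pD+H+(n-i)H'$ and the integer divisor $pD$ gives
$$H^i\bigl(X,\mathcal{O}_X(pD+H+(n-i)H')\otimes \mathcal{J}((X,\D);\|pD\|)\bigr)=0\quad\text{for }i=1,\dots,n.$$
Castelnuovo--Mumford regularity with respect to $H'$ then implies that the sheaf
$$\mathcal{F}_p:=\mathcal{O}_X(pD+H+nH')\otimes \mathcal{J}((X,\D);\|pD\|)$$
is globally generated.

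Since by hypothesis $\mathcal{J}((X,\D);\|pD\|)_x=\mathcal{O}_{X,x}$, the stalk $(\mathcal{F}_p)_x$ coincides with $\mathcal{O}_X(pD+H+nH')_x$, so global generation supplies a section of $\mathcal{O}_X(pD+H+nH')$ not vanishing at $x$; equivalently, $x \notin \B(pD+H+nH')$. Setting $A_p:=\tfrac{1}{p}(H+nH')$, which is an ample $\Q$-divisor, and using $pD+H+nH'\sim_\Q p(D+A_p)$, we get $x \notin \B(D+A_p)$, hence $x\notin \B_-(D)$ by Definition \ref{bmenodef}. The only point requiring care is that Nadel's vanishing be applied in the pair setting (hence the reliance on $K_X+\D$ being $\Q$-Cartier and on the asymptotic statement of Theorem \ref{thm:nadel}) and that the ample divisor $H$ be chosen once and for all, independently of $p$; beyond this, the argument is formally identical to the smooth case in \cite[Proposition 2.8]{ELMNP}.
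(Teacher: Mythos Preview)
Your proof is correct and follows essentially the same approach as the paper's: contrapositive, Nadel vanishing for the pair applied to $pD$ plus a fixed ample twist, Castelnuovo--Mumford regularity to get global generation, and then the triviality of the multiplier ideal at $x$ to conclude $x\notin\mathrm{Bs}|pD+\text{(fixed ample)}|$ for all $p$. The only cosmetic difference is that the paper uses a single very ample $A$ with $A-(K_X+\Delta)$ ample (so the regularity is with respect to $A$ and the globally generated sheaf is $\mathcal{O}_X((n+1)A+pD)\otimes\mathcal{J}$), whereas you separate the roles into $H$ and $H'$; this changes nothing substantive.
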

\begin{proof}
We will follow \cite[proof of Proposition 2.8]{ELMNP}, taking into account the fact that $X$ may be singular.
Take $x\in X$ such that $x \not \in \mathcal{Z}(\mathcal{J}((X,\Delta);\|pD\|))$  for every $p \geq 1$. We will show that $x \not \in \mathbf{B}_-(D)$.

Let $A$ be a fixed very ample divisor such that $A-(K_X+\Delta)$ is ample.
Set $n=\dim X$. By Nadel's vanishing theorem in the singular setting (see Theorem \ref{thm:nadel}),
we have that, for every $i \geq 1$,
$$H^i \left( X, \mathcal{O}_X \left( (n+1)A+pD\right) \otimes \mathcal{O}_X \left( -iA \right)\otimes \mathcal{J} ((X,\Delta);\|pD\|)\right) =0.$$
Thus, by Castelnuovo--Mumford's criterion (see \cite[Theorem 1.8.5]{LazI} or \cite[p.\ 194]{LazII}),
we have that $\mathcal{O}_X \left( (n+1)A+pD\right) \otimes \mathcal{J} ((X,\Delta);\|pD\|)$ is globally generated.
But, by hypothesis, $\mathcal{J}((X,\Delta);\|pD\|)_x =\mathcal{O}_{X,x}$.
Therefore we get that $x \not \in \text{Bs}|(n+1)A+pD|$ for every $p \geq 1$, i.e., $x \not \in \mathbf{B}_-(D)$.
\end{proof}

\begin{proposition} \label{spariscedelta}
Let $(X,\D)$ be an effective KLT pair such that $X$ is smooth and $\mult_x(\D)<1$ for every $x \in X$. If $D$ is a divisor on $X$ such that $\kappa(X,D)\geq 0$, then $\mathcal{Z}(\mathcal{J}((X,\D);\|D\|))\subseteq \mathrm{NNA}(D)$.
\end{proposition}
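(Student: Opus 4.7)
The plan is to prove the contrapositive: fix a (closed) point $x\notin\mathrm{NNA}(D)$ and show that $\mathcal{J}((X,\D);\|D\|)_x=\mathcal{O}_{X,x}$. The idea is to exhibit a single sufficiently divisible $k$ and a general $D_k\in|kD|$ such that the $\Q$-divisor $\D+\tfrac{1}{k}D_k$ has multiplicity strictly less than $1$ at $x$; since $X$ is smooth, Theorem \ref{thm:mult} will then yield
$$\mathcal{J}((X,\D);\|D\|)_x=\mathcal{J}((X,\D);\tfrac{1}{k}D_k)_x=\mathcal{J}(X,\D+\tfrac{1}{k}D_k)_x=\mathcal{O}_{X,x},$$
using the identification of the asymptotic multiplier ideal with the ordinary one for large divisible $k$ and general $D_k$, together with the equality $\mathcal{J}((X,\D);D')=\mathcal{J}(X,\D+D')$ recalled in the preliminaries.

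The required multiplicity estimate will come from the hypothesis $x\notin\mathrm{NNA}(D)$. I would consider the blow-up $\pi\colon X'\to X$ at the point $x$, with exceptional divisor $E$, and the associated geometric valuation $v:=\ord_E$ on $\C(X)$. Its center on $X$ is $\{x\}$, so Definition \ref{gigetto} forces $v(\|D\|)=0$. Unwinding Definition \ref{asymptotic} and using the standard equality $v(D_p)=\mult_x(D_p)$ for any effective divisor $D_p$ on $X$, this says that, for $D_p\in|pD|$ a general member,
$$\lim_{p\to\infty}\frac{\mult_x(D_p)}{p}=0,$$
where the limit is taken over sufficiently divisible $p$ (which exist since $\kappa(X,D)\geq 0$).

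To conclude I would fix a threshold $k_0$ for which $\mathcal{J}((X,\D);\|D\|)=\mathcal{J}((X,\D);\tfrac{1}{k}|kD|)$ holds whenever $k$ is a positive multiple of $k_0$, and set $\varepsilon:=1-\mult_x\D$, which is positive by the hypothesis $\mult_x\D<1$. The previous paragraph lets me pick a multiple $k$ of $k_0$ large enough that $\mult_x(D_k)/k<\varepsilon$ for general $D_k\in|kD|$. Then
$$\mult_x\!\bigl(\D+\tfrac{1}{k}D_k\bigr)=\mult_x\D+\tfrac{1}{k}\mult_x D_k<1,$$
and Theorem \ref{thm:mult} closes the argument.

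The only mild technicality is coordinating the two conditions on $k$ (divisibility by $k_0$ for the multiplier-ideal identification, and smallness of the multiplicity quotient). This is not really an obstacle, because the limit defining $v(\|D\|)$ is taken precisely over sufficiently divisible $p$, so along the arithmetic progression of multiples of $k_0$ the quotient $\mult_x(D_k)/k$ already tends to $0$. Everything else is either definitional or directly quoted from the preliminary section.
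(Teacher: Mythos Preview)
Your proof is correct and follows essentially the same approach as the paper's own proof: both hinge on the valuation $\mult_x$ (equivalently $\ord_E$ for $E$ the exceptional divisor of the blow-up at $x$), the identification $\mathcal{J}((X,\D);\|D\|)=\mathcal{J}(X,\D+\tfrac{1}{k}D_k)$ for $k$ large and divisible and $D_k\in|kD|$ general, and Theorem~\ref{thm:mult}. The only difference is direction: the paper argues directly (if $x\in\mathcal{Z}(\mathcal{J}((X,\D);\|D\|))$ then $\mult_x(\tfrac{1}{k}D_k)\geq 1-\mult_x\D$ for all such $k$, hence $\mult_x(\|D\|)>0$), which avoids your ``mild technicality'' of coordinating two conditions on $k$, since the lower bound is uniform in $k$.
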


\begin{proof}
For every $k$ sufficiently large and divisible, and $D_k \in |kD|$ general, \linebreak $\mathcal{J}((X,\D);\|D\|)=\mathcal{J}((X,\D);\frac{1}{k}D_k)$. Now, if $x \in \mathcal{Z}(\mathcal{J}((X,\D);\frac{1}{k}D_k))$, then, by Theorem \ref{thm:mult}, $\mult_x(\D+ \frac{1}{k} D_k) \geq 1$. Since $\mult_x$ is additive and, by the hypothesis on $\D$, $\mult_x(\D) =1-c_x$ for a certain $c_x >0$, then $\mult_x( \frac{1}{k} D_k) \geq c_x$.
Thus $$\mult_x(\|D\|)=\lim_{k\to \infty} \mult_x( \frac{1}{k} D_k)\geq c_x>0,$$ so that $x \in \mathrm{NNA}(D)$.
\end{proof}

\begin{theorem}\label{principale}
Let $(X,\D)$ be an effective KLT pair.
If $D$ is a divisor on $X$ such that $\kappa(X,D)\geq 0$, then
$$\bigcup_{p\in \N} \mathcal{Z}(\mathcal{J}((X,\D);\|pD\|))\subseteq \mathrm{NNA}(D).$$

\end{theorem}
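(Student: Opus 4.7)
The plan is to reduce the statement to the smooth case already handled in Proposition \ref{spariscedelta} by passing to a carefully chosen log-resolution of $(X,\D)$. Fix $p\geq 1$ with $|pD|\neq \emptyset$ and take $x \in \mathcal{Z}(\mathcal{J}((X,\D);\|pD\|))$. I would begin by choosing $f:Y \to X$, a log-resolution of $(X,\D)$, and denoting by $\Gamma_Y$ the unique $\Q$-divisor on $Y$ satisfying $K_Y + \Gamma_Y \equiv f^*(K_X+\D)$ and $f_*\Gamma_Y=\D$. The KLT assumption forces every coefficient of $\Gamma_Y$ to be strictly less than $1$ and, by construction, its support is SNC. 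After possibly further blowing up the strata along which the effective part $\Gamma_Y^+$ of $\Gamma_Y$ has multiplicity $\geq 1$, we may moreover arrange that $\mult_y(\Gamma_Y^+)<1$ for every $y\in Y$.

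With this setup, Lemma \ref{transformation rule} gives
$$\mathcal{J}((X,\D);\|pD\|) = f_*\mathcal{J}((Y,\Gamma_Y);\|pf^*D\|),$$
so Lemma \ref{immagine ideale} yields some $y \in f^{-1}(x)$ with $y \in \mathcal{Z}(\mathcal{J}((Y,\Gamma_Y);\|pf^*D\|))$. Since $\Gamma_Y^+ \geq \Gamma_Y$, monotonicity of asymptotic multiplier ideals gives
$$\mathcal{J}((Y,\Gamma_Y^+);\|pf^*D\|) \subseteq \mathcal{J}((Y,\Gamma_Y);\|pf^*D\|),$$
so $y$ also lies in the zero locus of the former. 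Now $(Y,\Gamma_Y^+)$ is an effective KLT pair on a smooth variety satisfying the multiplicity hypothesis $\mult_y(\Gamma_Y^+)<1$, so Proposition \ref{spariscedelta} yields $y \in \mathrm{NNA}(pf^*D)$; by homogeneity of the asymptotic order of vanishing this equals $\mathrm{NNA}(f^*D)$. Finally Lemma \ref{immagine nna} gives $x = f(y) \in f(\mathrm{NNA}(f^*D)) = \mathrm{NNA}(D)$, which is what we want.

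The step I expect to be the main obstacle is the construction of the log-resolution satisfying $\mult_y(\Gamma_Y^+)<1$ everywhere. A generic log-resolution only guarantees SNC support with KLT coefficients in $[0,1)$; at points where several positive-coefficient components of $\Gamma_Y^+$ meet, the local multiplicity may still exceed $1$. The idea is to blow up such bad intersection strata iteratively: blowing up a codimension-$c$ stratum $T$ along which $\mult_T\Gamma_Y=m$ introduces a new exceptional divisor whose coefficient in the transformed discrepancy boundary is $m-(c-1)$, which is strictly less than $1$ because each original coefficient is $<1$ and hence $m<c$. An induction on a suitable measure of the bad strata (e.g.\ the decreasing lexicographic ordering of multiplicities $\geq 1$) shows that the process terminates in a log-resolution with the desired property.
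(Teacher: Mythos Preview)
Your proposal is correct and follows essentially the same route as the paper: pass to a log-resolution on which the effective part of the crepant boundary has multiplicity $<1$ everywhere, apply Proposition \ref{spariscedelta} there, and descend via the birational transformation rule together with Lemmas \ref{immagine ideale} and \ref{immagine nna}. The only difference is that the paper obtains the required resolution by a direct citation of \cite[Proposition 2.36]{Kollar} (which yields a log-resolution where the components of the effective boundary are pairwise disjoint), whereas you sketch the same construction by hand via iterated blow-ups of the bad strata; your discrepancy computation for the new exceptional coefficient is correct, and the termination you gesture at is exactly the content of that proposition.
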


\begin{proof}
We have to show that, for every positive integer $p$,
we have that
\begin{equation} \label{inclusione}
\mathcal{Z}(\mathcal{J}((X,\D);\|pD\|))\subseteq \mathrm{NNA}(D).
\end{equation}

Since by definition $\mathrm{NNA}(pD)=\mathrm{NNA}(D)$,
without loss of generality we can assume that $p=1$.

As $(X,\D)$ is a KLT pair, by \cite[Proposition 2.36]{Kollar}, there exists $f:Y\to X$, a log-resolution of $(X,\D)$, such that
we can write $$K_Y+\D'_Y=f^*(K_X+\D)+E,$$
where $\D'_Y$ and $E$ are effective $\Q$-divisors without common components and the components of $\D'_Y$ are disjoint.
In particular, $Y$ is smooth and $\mult_y \D'_Y<1$ for every $y \in Y$.

 Thus, by Proposition \ref{spariscedelta}, we have that
 \begin{equation} \label{inclusione_sopra}
\mathcal{Z}(\mathcal{J}((Y,\D'_Y);\|f^*D\|))\subseteq \mathrm{NNA}(f^*D).
\end{equation}
Now, since $\D'_Y \geq \D'_Y-E$, then $\mathcal{J}((Y,\Delta'_Y-E);\|f^*D\|)\supseteq \mathcal{J}((Y,\Delta'_Y);\|f^*D\|)$.

Hence, by Lemma \ref{transformation rule}, we have that $$\mathcal{J}((X,\Delta);\|D\|)=f_*\left(\mathcal{J}((Y,\Delta'_Y-E);\|f^*D\|)\right)\supseteq f_* \left( \mathcal{J}((Y,\Delta'_Y);\|f^*D\|) \right),$$ so that
$$\mathcal{Z}\left(\mathcal{J}((X,\Delta);\|D\|) \right) \subseteq \mathcal{Z}\left( f_* \left( \mathcal{J}((Y,\Delta'_Y);\|f^*D\|) \right)\right).$$

But, by Lemma \ref{immagine ideale},  $\mathcal{Z} \left( f_* \left( \mathcal{J}((Y,\Delta'_Y);\|f^*D\|) \right) \right) \subseteq
f \left( \mathcal{Z} \left( \mathcal{J}((Y,\Delta'_Y);\|f^*D\|) \right) \right)$ and, by Lemma \ref{immagine nna}, $f(\mathrm{NNA}(f^*D))=
\mathrm{NNA}(D)$, so that (\ref{inclusione}) follows by (\ref{inclusione_sopra}) and we are done.
\end{proof}

If the effective pair $(X,\Delta)$ is not KLT, the same statement does not hold in general, because
the zeroes of the asymptotic multiplier ideals depend also on the singularities of the pair.
Anyway, we can still recover the previous result outside the non-klt locus.
More precisely the following holds:

\begin{corollary}\label{corprincipale}
Let $(X,\D)$ be an effective pair.
If $D$ is a divisor on $X$ such that $\kappa(X,D)\geq 0$, then
$$\bigcup_{p\in \N} \mathcal{Z}(\mathcal{J}((X,\D);\|pD\|)) \setminus \mathrm{Nklt}(X,\Delta) \subseteq \mathrm{NNA}(D).$$
\end{corollary}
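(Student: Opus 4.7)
The plan is to mirror the proof of Theorem \ref{principale}, being careful to localize outside the non-klt locus. Since $\mathrm{NNA}(pD)=\mathrm{NNA}(D)$, I may reduce to $p=1$ and fix $x\in \mathcal{Z}(\mathcal{J}((X,\Delta);\|D\|))\setminus \mathrm{Nklt}(X,\Delta)$; the goal is to show $x\in\mathrm{NNA}(D)$.

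As in Theorem \ref{principale}, I would choose a log-resolution $f:Y\to X$ of $(X,\Delta)$ giving $K_Y+\Delta'_Y=f^*(K_X+\Delta)+E$ with $\Delta'_Y,E$ effective having no common components; by further blow-ups of snc intersections, we may also arrange that the components of $\Delta'_Y$ are pairwise disjoint, just as in the KLT case. The new ingredient required is the observation that every ``bad'' component $E_i$ of $\Delta'_Y$---one with coefficient $c_i\geq 1$, equivalently with $a(E_i,X,\Delta)=-c_i\leq -1$---satisfies $f(E_i)\subseteq \mathrm{Nklt}(X,\Delta)$. This is immediate from the formula $\mathcal{J}(X,\Delta)=f_*\mathcal{O}_Y(\sum \lceil a(E_j)\rceil E_j)$: thanks to the disjointness of components of $\Delta'_Y$, near the generic point of such an $E_i$ the divisor $\sum \lceil a(E_j)\rceil E_j$ equals $-\lfloor c_i\rfloor E_i$ (since $E$ and $\Delta'_Y$ share no components), so the pushforward is a proper ideal at every point of $f(E_i)$. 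Because $x\notin \mathrm{Nklt}(X,\Delta)$, the fiber $f^{-1}(x)$ avoids every bad component; the disjointness of components of $\Delta'_Y$ then forces $\mathrm{mult}_y \Delta'_Y<1$ for every $y\in f^{-1}(x)$.

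With this in hand, the argument follows Theorem \ref{principale} verbatim. Lemma \ref{transformation rule} applied with $\Delta_Y=\Delta'_Y-E$, together with the inequality $\Delta'_Y-E\leq \Delta'_Y$, yields $\mathcal{J}((X,\Delta);\|D\|)\supseteq f_*\mathcal{J}((Y,\Delta'_Y);\|f^*D\|)$, and Lemma \ref{immagine ideale} produces some $y\in f^{-1}(x)$ with $y\in \mathcal{Z}(\mathcal{J}((Y,\Delta'_Y);\|f^*D\|))$. The pointwise argument inside the proof of Proposition \ref{spariscedelta}---writing $\mathcal{J}((Y,\Delta'_Y);\|f^*D\|)=\mathcal{J}(Y,\Delta'_Y+\tfrac{1}{k}D_k)$ for $k$ sufficiently large and divisible and $D_k\in |k f^*D|$ general, and applying Theorem \ref{thm:mult} at $y$---requires only $\mathrm{mult}_y\Delta'_Y<1$ at the individual point $y$, and so it delivers $\mathrm{mult}_y(\|f^*D\|)\geq 1-\mathrm{mult}_y \Delta'_Y>0$. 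Hence $y\in \mathrm{NNA}(f^*D)$ and $x=f(y)\in \mathrm{NNA}(D)$ by Lemma \ref{immagine nna}.

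The main obstacle is arranging the log-resolution: when $(X,\Delta)$ is not KLT one cannot directly invoke \cite[Proposition 2.36]{Kollar} as in Theorem \ref{principale}. However, iterated blow-ups of snc intersection loci of components of $\Delta'_Y$ still achieve the required disjointness, and a short discrepancy computation for the blow-up of an snc intersection $E_{i_1}\cap\cdots\cap E_{i_r}$ shows that the resulting exceptional divisor has coefficient $\max(0,\sum_{j}c_{i_j}-r+1)$ in the new $\Delta'_Y$; this is $\geq 1$ only if some $c_{i_j}\geq 1$, and in that case the intersection $E_{i_1}\cap\cdots\cap E_{i_r}$ already sits over $\mathrm{Nklt}(X,\Delta)$, so the same holds for the new exceptional divisor. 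Thus the process does not introduce bad components whose images escape $\mathrm{Nklt}(X,\Delta)$, and the proof goes through.
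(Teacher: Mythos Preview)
Your argument is correct, but the paper reaches the conclusion by a shorter, more modular route. Instead of performing further blow-ups to make the components of $\Delta'_Y$ pairwise disjoint (and then checking, via your discrepancy computation, that no new bad component can escape $\mathrm{Nklt}(X,\Delta)$), the paper takes an arbitrary log-resolution $f:Y\to X$ of $(X,\Delta)$, writes the induced $\Delta_Y$ as $\Delta_Y^{\geq 1}+\Delta_Y^+-\Delta_Y^-$ according to whether the coefficient is $\geq 1$, in $[0,1)$, or $<0$, and observes that for every $y\notin\mathrm{Nklt}(Y,\Delta_Y)=\supp(\Delta_Y^{\geq 1})$ the stalks $\mathcal{J}((Y,\Delta_Y^{\geq 1}+\Delta_Y^+);\|f^*D\|)_y$ and $\mathcal{J}((Y,\Delta_Y^+);\|f^*D\|)_y$ coincide. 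Since $(Y,\Delta_Y^+)$ is already an effective KLT pair, Theorem \ref{principale} applies to it as a black box, and the same push-forward chain (Lemmas \ref{transformation rule}, \ref{immagine ideale}, \ref{immagine nna}) that you use then finishes. Your approach has the virtue of being self-contained---you rerun the pointwise argument of Proposition \ref{spariscedelta} rather than invoking Theorem \ref{principale}---at the price of the iterated blow-up procedure and the inductive discrepancy bookkeeping. The paper's coefficient splitting buys brevity: it isolates an honest KLT boundary $\Delta_Y^+$ on the nose, so the ``make components disjoint'' work (and the appeal to \cite[Proposition 2.36]{Kollar}) is absorbed into the already-established KLT case.
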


\begin{proof}
As in the proof of Theorem \ref{principale} we can assume $p=1$, i.e., it is enough to show that
$\mathcal{Z}(\mathcal{J}((X, \Delta);\|D\|)) \setminus \mathrm{Nklt}(X,\D) \subseteq \mathrm{NNA}(D)$.
Let $f: Y \rightarrow X$ be a log-resolution of $(X,\Delta)$ and let $\Delta_Y$ be such that $K_Y+\Delta_Y \equiv f^*(K_X+\Delta)$.
Set $\Delta_Y= \sum a(E) E$, where the sum is taken on all prime divisors on $Y$, and define
$$\Delta_Y^{\geq 1} := \sum_{a(E) \geq 1}a(E)E,\;
\Delta_Y^+:=\sum_{0 \leq a(E) < 1}a(E)E, \; \Delta_Y^-:= \sum_{a(E) < 0}-a(E)E,$$
$$\Delta_Y':=\Delta_Y^{\geq 1} + \Delta_Y^+.$$
Notice that $\Delta_Y=\Delta_Y^{\geq 1}+\Delta_Y^+ - \Delta_Y^-$ and $f(\mathrm{Nklt}(Y,\Delta_Y))=\mathrm{Nklt}(X, \Delta)$.

As in the proof of Theorem \ref{principale}, by the birational transformation rule, Lemma \ref{immagine ideale} and Lemma \ref{immagine nna},
it is then enough to prove that $$\mathcal{Z}(\mathcal{J}((Y,\Delta_Y');\|f^*D\|))\setminus \mathrm{Nklt}(Y,\Delta_Y) \subseteq \mathrm{NNA}(f^*D).$$
At this point, notice that, for any $y \not \in \mathrm{Nklt}(Y,\Delta_Y)=\supp(\Delta_Y^{\geq 1})$, we have that
$\mathcal{J}((Y,\Delta_Y');\|f^*D\|)_y \cong \mathcal{J}((Y,\Delta_Y^+);\|f^*D\|)_y $. 
Therefore we are just left to prove that
$$\mathcal{Z}(\mathcal{J}((Y,\Delta_Y^+);\|f^*D\|)) \subseteq \mathrm{NNA}(f^*D),$$
but, since $(Y,\Delta_Y^+)$ is an effective KLT pair, this is just an instance of Theorem \ref{principale}, and we are done.
\end{proof}

\begin{theorem}\label{b-ennef}
Let $(X,\Delta)$ be an effective pair and let $D$ be an $\mathbb{R}$-divisor on $X$.
Then $\mathrm{NNef}(D) \setminus \mathrm{Nklt}(X,\Delta)=\mathbf{B}_-(D) \setminus \mathrm{Nklt}(X,\Delta)$.
In particular, if $(X,\Delta)$ is an effective KLT pair, then $\mathrm{NNef}(D)=\mathbf{B}_-(D)$.
\end{theorem}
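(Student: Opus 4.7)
The plan is to combine the two containments just established, namely $\B_-(D) \subseteq \bigcup_p \mathcal{Z}(\mathcal{J}((X,\D);\|pD\|))$ from Lemma \ref{nadel} and $\bigcup_p \mathcal{Z}(\mathcal{J}((X,\D);\|pD\|)) \setminus \mathrm{Nklt}(X,\D) \subseteq \mathrm{NNA}(D)$ from Corollary \ref{corprincipale}, with the identification $\mathrm{NNA}(D)=\NNef(D)$ valid for big divisors (Remark \ref{casobig}, restated in Remark \ref{nnaennef}). Since the opposite inclusion $\NNef(D) \subseteq \B_-(D)$ is already recorded from \cite{BBP}, the task is purely to prove $\B_-(D) \setminus \mathrm{Nklt}(X,\D) \subseteq \NNef(D)$, and we will do so first for big $\Q$-divisors, then transfer to pseudoeffective $\R$-divisors by approximation, then dispatch the non-pseudoeffective case separately. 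The "in particular" clause is then immediate because $\mathrm{Nklt}(X,\D)=\emptyset$ when $(X,\D)$ is KLT.

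\textbf{Big $\Q$-divisor case.} Both $\B_-$ and $\NNef$ are homogeneous under positive rational scaling (directly from their definitions and from $v_{\num}(cD)=c\,v_{\num}(D)$), so we may clear denominators and assume $D$ is an integral big divisor. Then $\kappa(X,D)=\dim X \geq 0$, so Lemma \ref{nadel} and Corollary \ref{corprincipale} combine to give
\[
\B_-(D)\setminus \mathrm{Nklt}(X,\D) \;\subseteq\; \bigcup_{p\in\N}\mathcal{Z}(\mathcal{J}((X,\D);\|pD\|))\setminus \mathrm{Nklt}(X,\D) \;\subseteq\; \mathrm{NNA}(D),
\]
and since $D$ is big, Remark \ref{nnaennef} identifies $\mathrm{NNA}(D)=\NNef(D)$, yielding the required inclusion.

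\textbf{Pseudoeffective $\R$-divisor case.} By Lemma \ref{successione}, pick a sequence $\{A_m\}$ of ample $\R$-divisors with $\|A_m\|\to 0$ and each $D+A_m$ a $\Q$-divisor; being the sum of a pseudoeffective and an ample class, each $D+A_m$ is a big $\Q$-divisor, so the previous step applies to it. Taking unions over $m$ and invoking Lemma \ref{bmeno} to express $\B_-(D)=\bigcup_m \B_-(D+A_m)$ and $\NNef(D)=\bigcup_m \NNef(D+A_m)$, the containment $\B_-(D+A_m)\setminus\mathrm{Nklt}(X,\D)\subseteq \NNef(D+A_m)\subseteq\NNef(D)$ passes to the union and gives the statement for $D$.

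\textbf{Non-pseudoeffective case.} By convention $\NNef(D)=X$, and since the pseudoeffective cone is closed we may pick an ample $\R$-divisor $A$ of sufficiently small norm so that $D+A$ is still not pseudoeffective; then $|D+A|_\R=\emptyset$, whence $\B(D+A)=X$ by convention, so $\B_-(D)\supseteq \B(D+A)=X$ and both loci are all of $X$. The main obstacle here is not technical — all the heavy lifting has been done in Lemma \ref{nadel} and Corollary \ref{corprincipale} — but lies in correctly orchestrating the approximation so that the "$\setminus \mathrm{Nklt}(X,\D)$" is respected when passing from big $\Q$-divisors to pseudoeffective $\R$-divisors; since $\mathrm{Nklt}(X,\D)$ does not depend on $D$, this step is clean.
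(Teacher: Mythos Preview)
Your proof is correct and follows essentially the same approach as the paper: reduce to a big integral divisor via Lemma~\ref{bmeno} and Lemma~\ref{successione} together with homogeneity, then chain Lemma~\ref{nadel}, Corollary~\ref{corprincipale}, and the identification $\mathrm{NNA}(D)=\NNef(D)$ for big $D$ from Remark~\ref{nnaennef}, with the reverse inclusion supplied by \cite[Lemma 1.8]{BBP}. The only cosmetic difference is that the paper dispatches the non-pseudoeffective case in one line (since $\NNef(D)=X$ and $\NNef(D)\subseteq\B_-(D)$ force $\B_-(D)=X$), whereas you argue it directly.
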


\begin{proof}
If $D$ is not pseudoeffective there is nothing to prove. Thus let us assume that $D$ is pseudoeffective.
Let $\{A_m\}$ be a sequence of ample $\mathbb{R}$-divisors as in Lemma \ref{successione}.
By Lemma \ref{bmeno}, it is then clear that we can furthermore assume that $D$ is a big $\mathbb{Q}$-divisor.
Since for every $c>0$, $\mathbf{B}_-(cD)=\mathbf{B}_-(D)$, and analogously for $\text{NNef}$, we can also assume that $D$ is integral.
By \cite[Lemma 1.8]{BBP} and Lemma \ref{nadel} we have that
$$\mathrm{NNef}(D) \subseteq \mathbf{B}_-(D) \subseteq \bigcup_p \mathcal{Z}(\mathcal{J}((X,\Delta);\|pD\|)).$$
Since $D$ is big, by Remark \ref{nnaennef}, $\mathrm{NNA}(D) = \mathrm{NNef}(D)$, whence, by Corollary \ref{corprincipale},
we have that $$ \bigcup_p \mathcal{Z}(\mathcal{J}((X,\Delta);\|pD\|)) \setminus \mathrm{Nklt}(X,\Delta) \subseteq \mathrm{NNef}(D)$$ and we are done.
\end{proof}

\begin{remark}
When $X$ is smooth Theorem \ref{b-ennef} has been proved by Nakayama in \cite{Nakayama} (see also 
\cite{ELMNP}).

If $(X, \Delta)$ is an effective KLT pair, then Theorem \ref{b-ennef} has been proved to hold for the divisor $K_X+\Delta$ by
Boucksom, Broustet, Pacienza in \cite[Proposition 1.10]{BBP} using \cite{BCHM}.

In general, it has been conjectured that the equality $\NNef(D)=\B_-(D)$ holds for every pseudoeffective $\R$-divisor on any normal projective variety: see, for example, \cite[Conjecture 1.9]{BBP}.
\end{remark}


Since $\mathrm{NNef}(D)$ and $\mathbf{B}_-(D)$ do not depend on the chosen boundary divisor $\Delta$,
if we set $X_{\mathrm{Nklt}}:=\bigcap_{\Delta \in \mathcal{F}} \mathrm{Nklt}(X,\Delta)$, where
$$\mathcal{F}:=\{E  \ \mathbb{Q}\text{-Weil divisor} \mid (X,E) {\text{ is an effective pair}}\},$$
then the following holds:

\begin{corollary}\label{corb-ennef}
Let $X$ be a normal projective variety and let $D$ be an $\mathbb{R}$-divisor on $X$.
Then $\mathrm{NNef}(D) \setminus X_{\mathrm{Nklt}} = \mathbf{B}_-(D) \setminus X_{\mathrm{Nklt}}$.
\end{corollary}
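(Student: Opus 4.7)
The plan is to deduce this corollary directly from Theorem \ref{b-ennef} by a pointwise argument, choosing a different boundary $\Delta$ for each point outside $X_{\mathrm{Nklt}}$. The inclusion $\NNef(D) \setminus X_{\mathrm{Nklt}} \subseteq \B_-(D) \setminus X_{\mathrm{Nklt}}$ is free: it follows from the general containment $\NNef(D) \subseteq \B_-(D)$ (\cite[Lemma 1.8]{BBP}), which was already used in the proof of Theorem \ref{b-ennef}.

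For the reverse inclusion, the strategy is as follows. Let $x \in \B_-(D) \setminus X_{\mathrm{Nklt}}$. By the very definition of $X_{\mathrm{Nklt}}=\bigcap_{\Delta \in \mathcal{F}} \mathrm{Nklt}(X,\Delta)$, the assumption $x \notin X_{\mathrm{Nklt}}$ produces at least one boundary $\Delta_x \in \mathcal{F}$ such that $x \notin \mathrm{Nklt}(X,\Delta_x)$. Since $(X,\Delta_x)$ is by definition an effective pair, I would then apply Theorem \ref{b-ennef} to it, obtaining
\[
\B_-(D) \setminus \mathrm{Nklt}(X,\Delta_x) = \NNef(D) \setminus \mathrm{Nklt}(X,\Delta_x).
\]
Since $x$ belongs to the left-hand side, it belongs to $\NNef(D)$, which is exactly what is needed.

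The only minor subtlety is the degenerate case $\mathcal{F} = \emptyset$: then the empty intersection $X_{\mathrm{Nklt}}$ should be read as $X$, and both sides of the asserted equality are empty, so the statement is vacuous. Beyond this bookkeeping there is no real obstacle: all the substantive content (the chain of inclusions passing through asymptotic multiplier ideals, Nadel vanishing, and the log-resolution argument that absorbs the boundary) has already been packaged in Theorem \ref{b-ennef}, and the corollary is just its pointwise reformulation, freed from the choice of a particular KLT boundary.
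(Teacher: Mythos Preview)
Your proof is correct and matches the paper's approach. The paper does not give a formal proof of this corollary; it is stated immediately after the sentence ``Since $\mathrm{NNef}(D)$ and $\mathbf{B}_-(D)$ do not depend on the chosen boundary divisor $\Delta$\ldots'', which is precisely the pointwise argument you spell out: for each $x \notin X_{\mathrm{Nklt}}$ choose a boundary $\Delta_x$ witnessing this and apply Theorem~\ref{b-ennef}.
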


\begin{remark}\label{Gongyo}
Note that Proposition \ref{smooth} follows also directly by Corollary \ref{corb-ennef} because, for every normal variety $X$
and any smooth closed point $x \in X$, there exists an effective pair $(X,\D_x)$ such that $x\not\in \mathrm{Nklt}(X,\D_x)$.

More precisely, as Y.\ Gongyo kindly pointed out to us,
we can find an effective $\mathbb{Q}$-Weil divisor $\Delta_x$ such that $(X,\Delta_x)$ is a pair and $x \not \in \supp(\Delta_x)$.

In fact, if $H$ is an ample Cartier divisor, then the coherent sheaf $\mathcal{O}_X(-K_X+mH)$ is globally generated for $m\in \N$ large enough.
On the other hand $\mathcal{O}_X(-K_X+mH)_x\simeq \mathcal{O}_{X,x}$, because $x\in X$ is a smooth point,
so that there exists a section $s_x\in H^0(X,\mathcal{O}_X(-K_X+mH))$ not vanishing on $x$
and we can just take $\D_x=\{s_x=0\}$.
Notice that $K_X+\D_x\sim mH$ is a Cartier divisor, i.e., $(X, \Delta_x)$ is a pair. 


\end{remark}

Note that, as $\B_-(D)$ does not contain isolated points (see \cite[Proposition 1.1]{ELMNPrestricted}), by Corollary \ref{corb-ennef} we deduce the
following:

\begin{corollary}\label{dim0}
Let $X$ be a normal projective variety such that $X_\mathrm{Nklt}$ has dimension 0.
Then for every $\R$-divisor $D$ on $X$ we have that $\mathrm{NNef}(D)=\mathbf{B}_-(D)$.

\end{corollary}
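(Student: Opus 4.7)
The plan is to combine Corollary \ref{corb-ennef} with the fact, recalled in the statement, that $\mathbf{B}_-(D)$ contains no isolated points. Since $\mathrm{NNef}(D) \subseteq \mathbf{B}_-(D)$ holds in general by \cite[Lemma 1.8]{BBP}, the task reduces to verifying the reverse inclusion on $X_{\mathrm{Nklt}}$, i.e., showing $\mathbf{B}_-(D) \cap X_{\mathrm{Nklt}} \subseteq \mathrm{NNef}(D)$.

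Fix $x \in \mathbf{B}_-(D) \cap X_{\mathrm{Nklt}}$. Since $\dim X_{\mathrm{Nklt}} = 0$, the point $x$ is isolated in $X_{\mathrm{Nklt}}$, so I can pick a Euclidean open neighborhood $U$ of $x$ with $U \cap X_{\mathrm{Nklt}} = \{x\}$. The fact that $\mathbf{B}_-(D)$ has no isolated points (via \cite[Proposition 1.1]{ELMNPrestricted}) then produces a sequence $y_n \in \mathbf{B}_-(D) \cap (U \setminus \{x\})$ with $y_n \to x$ in the Euclidean topology; each $y_n$ lies outside $X_{\mathrm{Nklt}}$, and hence $y_n \in \mathrm{NNef}(D)$ by Corollary \ref{corb-ennef}.

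To promote this to $x \in \mathrm{NNef}(D)$, I would invoke the observation (already recorded in the excerpt just before Corollary \ref{divisorial}) that $\mathrm{NNef}(D) = \bigcup_i V_i$ can be written as a countable union of Zariski-closed, hence Euclidean-closed, subvarieties of $X$. By the pigeonhole principle some $V_i$ contains infinitely many of the $y_n$, and Euclidean-closedness of $V_i$ forces $x \in V_i \subseteq \mathrm{NNef}(D)$, completing the argument.

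The only step that is not a direct quotation of earlier material is this last pigeonhole/limit argument, which is essentially the same Baire-type trick used in the proof of Corollary \ref{divisorial}. Accordingly, I do not anticipate any serious obstacle; the whole corollary is a short deduction from the already-established Corollary \ref{corb-ennef} together with the no-isolated-points property of $\mathbf{B}_-(D)$.
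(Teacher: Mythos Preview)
Your proposal is correct and follows exactly the route the paper indicates: the paper's entire proof is the single sentence preceding the statement, namely that the corollary follows from Corollary~\ref{corb-ennef} together with the fact (\cite[Proposition 1.1]{ELMNPrestricted}) that $\mathbf{B}_-(D)$ has no isolated points. You have simply spelled out the details the paper leaves implicit, in particular the pigeonhole/limit step needed because $\mathrm{NNef}(D)$ is only a countable union of closed sets rather than closed itself; this is the same Baire-type trick already used in Corollary~\ref{divisorial}, as you note.
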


\section{Nef and abundant divisors}
\subsection{Characterization of nef-abundant divisors}

In \cite[Theorem 2]{R} F.\ Russo states a characterization of nef and abundant divisors on a smooth projective variety $X$
by means of asymptotic multiplier ideals. Given Theorem \ref{principale} and its counterpart below (Theorem \ref{secondario}),
we can extend the same characterization to KLT pairs.
\begin{theorem} \label{secondario}
Let $(X,\D)$ be an effective pair.
If $D$ is a divisor on $X$ such that $\kappa(X,D)\geq 0$, then
$$\mathrm{NNA}(D) \subseteq \bigcup_{p\in \N} \mathcal{Z}(\mathcal{J}((X,\D);\|pD\|)). $$
\end{theorem}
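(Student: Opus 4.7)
The plan is to establish the inclusion valuation-by-valuation: given a geometric valuation $v$ with $v(\|D\|)>0$, I will exhibit a single positive integer $p$ such that $c_X(v)\subseteq\mathcal{Z}(\mathcal{J}((X,\D);\|pD\|))$. The mechanism is a direct calculation of the multiplier ideal on a log-resolution that realizes $v$ as the order along a prime divisor, combined with the inf-characterization $v(\|D\|)=\inf\{v(E)\mid E\in|D|_{\Q}\}$ from Section 2.

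First I would fix $v$ with $\alpha:=v(\|D\|)>0$ and realize it as $v=\ord_{E_0}$ on some smooth birational model $Y_0\to X$. After further blow-ups there is a log-resolution $\mu\colon Y\to X$ of $(X,\D+pkD)$ (for any prescribed $p,k\in\N$) factoring through $Y_0$; the strict transform of $E_0$ is a prime divisor $E\subseteq Y$ with $v=\ord_E$, and the discrepancy $a:=a(E,X,\D)$ is an invariant of $v$ alone.

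Next I would choose $p\in\N$ with $p\alpha\geq 1+a$, which is possible because $\alpha>0$. For $k$ sufficiently large and divisible, the standard description of the asymptotic multiplier ideal on the log-resolution gives
$$
\mathcal{J}((X,\D);\|pD\|)=\mathcal{J}\bigl((X,\D);\tfrac{1}{k}|pkD|\bigr)=\mu_*\mathcal{O}_Y\Bigl(\bigl\lceil K_Y-\mu^*(K_X+\D)-\tfrac{1}{k}F_{pk}\bigr\rceil\Bigr),
$$
where $\mu^*|pkD|=|W_{pk}|+F_{pk}$ with $|W_{pk}|$ base-point free. Since $|W_{pk}|$ is base-point free one has $\ord_E(F_{pk})=v(|pkD|)$; combined with the subadditive convergence $v(|pkD|)/(pk)\geq v(\|D\|)=\alpha$, the coefficient of $E$ in the divisor inside the pushforward is at most $a-p\alpha\leq -1$.

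The round-up of that coefficient is therefore $\leq -1$, so $\mathcal{J}((X,\D);\|pD\|)$ is contained in the pushforward of an ideal sheaf vanishing along $E$, and hence vanishes along $\mu(E)=c_X(v)$. This yields $c_X(v)\subseteq\mathcal{Z}(\mathcal{J}((X,\D);\|pD\|))$, proving the theorem. The delicate step I expect is the joint bookkeeping: one has to arrange a single log-resolution that simultaneously realizes $v$ as $\ord_E$ and computes the asymptotic multiplier ideal for the chosen divisible $k$. This is routine but must be set up by first fixing $p$ and then choosing $k$ and $\mu$ compatibly, possibly by passing to a common higher blow-up.
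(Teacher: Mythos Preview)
Your proposal is correct and follows essentially the same strategy as the paper: fix a valuation $v$ with $v(\|D\|)=\alpha>0$, realize it as $\ord_E$ on a birational model, choose $p$ so that $p\alpha\geq 1+a(E,X,\D)$, and use $v(|pkD|)\geq pk\alpha$ to force the discrepancy of $E$ with respect to $(X,\D+\tfrac{1}{k}D_k)$ to be at most $-1$. The only presentational difference is that the paper sidesteps your ``joint bookkeeping'' concern by working directly with a general divisor $D'\in|hp_0D|$ and invoking the model-independent characterization $\mu(F)\subseteq\mathrm{Nklt}(X,\D+\tfrac{1}{h}D')=\mathcal{Z}(\mathcal{J}((X,\D);\|p_0D\|))$, so no single common log-resolution needs to be constructed.
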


\begin{proof}

By taking a multiple of $D$, without loss of generality, we can assume that $|kD|\not =\emptyset$ for every $k\in \N$.

Assume that $x\in \mathrm{NNA}(D)$. Then  there exists a geometric valuation $v$ such that $v(\|D\|)=\nu>0$ and $x \in c_X(v)$.
Let $\mu:X'\to X$ be a resolution of singularities and let $F\subseteq X'$ be the prime divisor corresponding to $v$.
As
$$\nu=v(\|D\|)=\inf_{k \in \N} \left\{\frac{v(|kD|)}{k}\right\},$$
we have that $v(|kD|)\geq k\nu$ for every $k\in \N$.
Now, let us write
$$K_{X'}=\mu^*(K_X+\D)+\sum a(E) \cdot E,$$
where the sum is taken over all prime divisors $E\subseteq X'$, and let us define
$$
p_0:=\left\{\begin{array}{ll} \left\lceil{ \frac{1+a(F)}{\nu} }\right\rceil & \mbox{if } a(F)>-1
\\ 1 & \mbox{if } a(F)\leq -1. \end{array}\right.
$$
Take $h \in \mathbb{N}$ such that $\mathcal{J}((X,\D);\|p_0D\|)=
\mathcal{J}\left((X,\D);\frac{1}{h}D'\right)$, for a general divisor $D'\in |hp_0D|$.
If we write $\mu^*(D')=\sum b(E)\cdot E$,
then
$$K_{X'}-\mu^*\left(K_X+\D+\frac{1}{h}D'\right)=\sum \left(a(E)-\frac{1}{h} b(E)\right) \cdot E.$$

But now $b(F)= v(|hp_0D|)\geq h p_0 \nu$, so that
$a(F)-\frac{1}{h}b(F)\leq a(F)-p_0\nu\leq-1$.
This implies that
$$\mu(F)\subseteq \mathrm{Nklt}\left(X,\D+ \frac{1}{h} D'\right)=\mathcal{Z}\left(\mathcal{J}(X,\D+\frac{1}{h} D')\right)=
\mathcal{Z}\left(\mathcal{J}((X,\D);\|p_0D\|)\right).$$
Since $x\in c_X(v)=\mu(F)$, we are done.
\end{proof}

\begin{corollary} \label{cor:merging}
Let $(X,\D)$ be an effective KLT pair.
If $D$ is a divisor on $X$ such that $\kappa(X,D)\geq 0$, then
$$\mathrm{NNA}(D)=\bigcup_{p\in \N} \mathcal{Z}(\mathcal{J}((X,\D);\|pD\|)). $$
\end{corollary}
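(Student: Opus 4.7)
The plan is extremely short, since the corollary is essentially a merging of the two inclusions already established in this section. First I would observe that Theorem \ref{principale} is exactly the inclusion
$$\bigcup_{p\in \N} \mathcal{Z}(\mathcal{J}((X,\D);\|pD\|))\subseteq \mathrm{NNA}(D),$$
and that this inclusion holds as soon as $(X,\D)$ is an effective KLT pair and $\kappa(X,D)\geq 0$, which are precisely our hypotheses.

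Next I would invoke Theorem \ref{secondario}, which gives the reverse inclusion
$$\mathrm{NNA}(D)\subseteq \bigcup_{p\in \N} \mathcal{Z}(\mathcal{J}((X,\D);\|pD\|))$$
under the weaker assumption that $(X,\D)$ is merely an effective pair; in particular it applies to our KLT pair. Note that the KLT hypothesis is not used here, but it is indispensable in the other direction (to be able to reduce to the smooth, disjoint-support case via \cite[Proposition 2.36]{Kollar} and then apply Proposition \ref{spariscedelta}).

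Combining the two inclusions yields the desired equality, so there is no real obstacle to overcome: the whole difficulty has already been absorbed into Theorem \ref{principale} (for which the KLT hypothesis is crucial, via log-resolution and the multiplicity bound $\mult_y \D'_Y<1$) and Theorem \ref{secondario} (where one transfers a lower bound on $v(\|D\|)$ on a resolution into a non-klt statement on $X$). Thus the proof of the corollary reduces to a single sentence citing these two results.
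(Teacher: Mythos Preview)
Your proposal is correct and follows exactly the paper's own approach: the paper's proof is the single sentence ``Just merge together Theorem \ref{principale} and Theorem \ref{secondario}.'' Your additional remarks about where the KLT hypothesis is actually used are accurate and helpful, but the core argument is identical.
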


\begin{proof}
Just merge together Theorem \ref{principale} and Theorem \ref{secondario}.
\end{proof}

As a particular case, we get the following:

\begin{corollary} \label{charnefabundant}
Let $(X,\D)$ be an effective KLT pair.
Suppose $D$ is a divisor on $X$ such that $\kappa(X,D) \geq 0$.
Then $D$ is nef and abundant if and only if  $\mathcal{J}((X,\Delta);\|pD\|)=\mathcal{O}_X$ for every $p \in \mathbb{N}$.

\end{corollary}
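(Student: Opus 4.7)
The plan is to deduce this characterization as a direct combination of Corollary \ref{cor:merging} and Lemma \ref{Russo_valuations}; no new geometric input is required beyond those two, so the proof should be short.

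More precisely, I would first translate both sides of the claimed equivalence into statements about the non nef-abundant locus. On one hand, the definition of $\mathrm{NNA}(D)$ together with Lemma \ref{Russo_valuations} says that $D$ is nef and abundant if and only if $v(\|D\|)=0$ for every geometric valuation $v$, equivalently $\mathrm{NNA}(D)=\emptyset$. On the other hand, since $(X,\Delta)$ is effective, each $\mathcal{J}((X,\Delta);\|pD\|)$ is an ideal sheaf inside $\mathcal{O}_X$, so $\mathcal{J}((X,\Delta);\|pD\|)=\mathcal{O}_X$ if and only if $\mathcal{Z}(\mathcal{J}((X,\Delta);\|pD\|))=\emptyset$. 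Asking this for every $p\in\N$ is therefore equivalent to $\bigcup_{p\in\N}\mathcal{Z}(\mathcal{J}((X,\Delta);\|pD\|))=\emptyset$.

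Now Corollary \ref{cor:merging}, which is available because $(X,\Delta)$ is an effective KLT pair and $\kappa(X,D)\geq 0$ (so that in particular $\kappa(X,pD)\geq 0$ for every $p\geq 1$, while for $p=0$ the ideal $\mathcal{J}(X,\Delta)$ equals $\mathcal{O}_X$ by the KLT hypothesis), identifies these two loci:
\[
\mathrm{NNA}(D)=\bigcup_{p\in\N}\mathcal{Z}(\mathcal{J}((X,\Delta);\|pD\|)).
\]
Chaining the three equivalences yields $D$ nef and abundant $\sse$ $\mathcal{J}((X,\Delta);\|pD\|)=\mathcal{O}_X$ for every $p\in\N$.

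There is essentially no obstacle left: the entire content of the corollary has been absorbed into Corollary \ref{cor:merging} and Lemma \ref{Russo_valuations}. The only point worth double-checking is the straightforward bookkeeping that $\mathcal{Z}(\mathcal{J})=\emptyset$ is equivalent to $\mathcal{J}=\mathcal{O}_X$ for an ideal sheaf $\mathcal{J}\subseteq\mathcal{O}_X$, and that the asymptotic multiplier ideals in the statement are all well-defined under the standing hypothesis $\kappa(X,D)\geq 0$.
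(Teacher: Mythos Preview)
Your proof is correct and matches the paper's approach: the paper presents Corollary \ref{charnefabundant} with the words ``As a particular case, we get the following'' immediately after Corollary \ref{cor:merging}, so it is treated as an immediate consequence of $\mathrm{NNA}(D)=\bigcup_p \mathcal{Z}(\mathcal{J}((X,\Delta);\|pD\|))$ together with the fact (Lemma \ref{Russo_valuations}/Remark \ref{nnaennef}) that $\mathrm{NNA}(D)=\emptyset$ iff $D$ is nef and abundant. Your write-up simply makes these implicit steps explicit.
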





\subsection{Applications}
Sometimes, when one needs to cope with a line bundle $L$ up to $\mathbb{Q}$-linear equivalence on a KLT pair $(X,\Delta)$,
it could be useful to reduce oneself to just forgetting the divisor and to studying a slightly different KLT pair $(X, \Delta')$, in which the boundary $\Delta' = \Delta + D$ ``has absorbed'' the divisor, i.e., $D \sim_{\mathbb{Q}} L$. It is well known that this can be easily done when $L$ is nef and big. By the lemma below, given Corollary \ref{charnefabundant}, the same is true even for nef and abundant divisors:

\begin{lemma} \label{idealebanale}
Let $(X, \Delta)$ be an effective KLT pair and let $L$ be a line bundle on $X$ such that $\kappa(X,L) \geq 0$.
Then $\mathcal{J}((X,\Delta);\|L\|)= \mathcal{O}_X$ if and only if
there exists an effective $\mathbb{Q}$-divisor $D$ such that $D \sim_{\mathbb{Q}} L$ and $(X, \Delta+D)$ is a KLT pair.
\end{lemma}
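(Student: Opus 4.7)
The plan is to prove both directions essentially straight from the definition of the asymptotic multiplier ideal, together with the upper semicontinuity of multiplier ideals inside a linear series.

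For the implication $(\Leftarrow)$, suppose we are given an effective $\mathbb{Q}$-divisor $D$ with $D \sim_{\mathbb{Q}} L$ such that $(X,\Delta+D)$ is KLT, i.e., $\mathcal{J}(X,\Delta+D)=\mathcal{O}_X$, equivalently $\mathcal{J}((X,\Delta);D)=\mathcal{O}_X$. Pick $m\geq 1$ large and divisible enough so that $mD$ is integral and $mD\sim mL$, so that $mD\in |mL|$. Viewing $mD$ as a (special) member of the linear series, upper semicontinuity of multiplier ideals in a linear series gives
$$\mathcal{J}\bigl((X,\Delta);\tfrac{1}{m}|mL|\bigr)\supseteq \mathcal{J}\bigl((X,\Delta);\tfrac{1}{m}\cdot mD\bigr)=\mathcal{J}((X,\Delta);D)=\mathcal{O}_X.$$
The maximality property in the definition of $\mathcal{J}((X,\Delta);\|L\|)$ as the unique maximal element of the family $\{\mathcal{J}((X,\Delta);\frac{1}{p}|pL|)\}$ then forces $\mathcal{J}((X,\Delta);\|L\|)\supseteq \mathcal{O}_X$; the reverse inclusion is automatic since $(X,\Delta)$ is effective, yielding equality.

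For the implication $(\Rightarrow)$, assume $\mathcal{J}((X,\Delta);\|L\|)=\mathcal{O}_X$. By the computation of asymptotic multiplier ideals via a general single member (Proposition preceding Theorem \ref{thm:mult}), there exist $k\in\mathbb{N}$ sufficiently large and divisible, and a general $D_k\in |kL|$, such that
$$\mathcal{O}_X=\mathcal{J}((X,\Delta);\|L\|)=\mathcal{J}\bigl((X,\Delta);\tfrac{1}{k}D_k\bigr)=\mathcal{J}\bigl(X,\Delta+\tfrac{1}{k}D_k\bigr).$$
Setting $D:=\frac{1}{k}D_k$, this is an effective $\mathbb{Q}$-divisor with $D\sim_{\mathbb{Q}} L$ and the vanishing of the multiplier ideal is exactly the statement that $(X,\Delta+D)$ is KLT; the pair condition $K_X+\Delta+D$ being $\mathbb{Q}$-Cartier is automatic since both $K_X+\Delta$ and $L$ are $\mathbb{Q}$-Cartier.

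I do not expect a serious obstacle here: the lemma is essentially a dictionary statement. The only point to keep straight is the asymmetry between a general member (whose multiplier ideal equals that of the linear series) and a specific member (whose multiplier ideal is only contained in it); this asymmetry is what makes the two directions go through with the inclusion in the right direction in each case.
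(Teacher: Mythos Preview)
Your argument is correct and is essentially the same as the paper's: both directions rest on the containment $\mathcal{J}((X,\Delta);\|L\|)\supseteq \mathcal{J}((X,\Delta);D)$ for any effective $D\sim_{\mathbb{Q}} L$, with equality achieved by $D=\tfrac{1}{k}D_k$ for $D_k\in|kL|$ general. You have simply unpacked the two halves of this statement more explicitly than the paper does.
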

\begin{proof} 
In general $\mathcal{J}((X,\Delta); \| L\|) \supseteq \mathcal{J}((X,\Delta);D)$ for every effective $\Q$-divisor $D$ such that $D \sim_\mathbb{Q} L$, and equality holds for certain such $D$. Moreover, by definition, $\mathcal{J}((X,\Delta);D)=\mathcal{O}_X$ if and only if $(X,\Delta+D)$ is KLT.
\end{proof}

To illustrate the general principle touched on before, we give a slight generalization
of a theorem by F. Campana, V. Koziarz, M. P\u aun, but only in the case of KLT pairs (see \cite[Corollary 1]{CKP}).
\begin{theorem}[Campana, Koziarz, P\u aun] \label{CKP}
Let $(X,\Delta)$ be an effective KLT pair of dimension $n$ and let $\rho$ be a $\mathbb{Q}$-divisor on $X$ such that
$\rho \equiv 0$. If $L$ is a nef and abundant line bundle on $X$,
then $\kappa(X,K_X+\Delta+L)\geq \kappa(X,K_X+\Delta+L+\rho)$.

In particular, the same holds if we assume that $L$ is nef and $\kappa(X,L) \geq n-1$.
\end{theorem}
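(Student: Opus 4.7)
The strategy is to absorb the line bundle $L$ into the boundary, thereby reducing the statement to the original Campana--Koziarz--P\u aun theorem, which concerns expressions of the form $K_X+\Delta$ alone. Since $(X,\Delta)$ is an effective KLT pair and $L$ is nef and abundant, Corollary \ref{charnefabundant} yields $\mathcal{J}((X,\Delta);\|L\|)=\mathcal{O}_X$ (note that $\kappa(X,L)\geq 0$, since $\kappa(X,L)=\nu(X,L)\geq 0$ for any nef $L$). Lemma \ref{idealebanale} then furnishes an effective $\mathbb{Q}$-divisor $D \sim_{\mathbb{Q}} L$ such that $(X,\Delta+D)$ is again an effective KLT pair.

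Having absorbed $L$, I would use the $\mathbb{Q}$-linear equivalences
$$K_X + \Delta + L \sim_{\mathbb{Q}} K_X + (\Delta+D), \qquad K_X + \Delta + L + \rho \sim_{\mathbb{Q}} K_X + (\Delta+D) + \rho,$$
and apply \cite[Corollary 1]{CKP} to the effective KLT pair $(X,\Delta+D)$ and the numerically trivial $\mathbb{Q}$-divisor $\rho$, obtaining
$$\kappa(X, K_X + (\Delta+D)) \geq \kappa(X, K_X + (\Delta+D) + \rho).$$
Since the Kodaira dimension is invariant under $\mathbb{Q}$-linear equivalence, the desired inequality $\kappa(X,K_X+\Delta+L) \geq \kappa(X,K_X+\Delta+L+\rho)$ follows immediately.

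For the final assertion, it suffices to verify that a nef line bundle $L$ with $\kappa(X,L) \geq n-1$ is automatically abundant. For nef $L$ one always has $0 \leq \kappa(X,L) \leq \nu(X,L) \leq n$. If $\kappa(X,L)=n$ then $L$ is big, hence abundant. If $\kappa(X,L)=n-1$, then $\nu(X,L)\in\{n-1,n\}$; but $\nu(X,L)=n$ would mean $L^n>0$, which together with nefness forces $L$ to be big and therefore $\kappa(X,L)=n$, a contradiction. Thus $\kappa(X,L)=\nu(X,L)=n-1$, so $L$ is nef and abundant. The main conceptual point of the argument is recognizing that the nef and abundant hypothesis on $L$ is precisely what one needs to move $L$ into the boundary while preserving the KLT property; once Corollary \ref{charnefabundant} and Lemma \ref{idealebanale} are in place, the reduction to the known CKP result is purely formal, so the "obstacle" has already been resolved upstream in the paper.
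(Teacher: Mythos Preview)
Your proof is correct and follows essentially the same approach as the paper: absorb $L$ into the boundary via Corollary \ref{charnefabundant} and Lemma \ref{idealebanale}, then invoke \cite[Corollary 1]{CKP} for the new KLT pair $(X,\Delta+D)$, and handle the final assertion by showing $\kappa(X,L)\geq n-1$ forces abundance through the standard inequality $\nu(X,L)\geq\kappa(X,L)$ together with the observation that $\nu(X,L)=n$ implies bigness. Your write-up is slightly more explicit than the paper's, but the argument is identical.
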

\begin{proof}
Since $L$ is nef and abundant, then, by Corollary \ref{charnefabundant} and Lemma \ref{idealebanale}, there exists $\Delta'$
such that $(X,\Delta')$ is an effective KLT pair and $K_X+ \Delta' \sim_{\mathbb{Q}} K_X+\Delta + L$.
Hence $$\kappa(X,K_X+\Delta+L)=\kappa(X,K_X+\Delta')\geq \kappa(X,K_X+\Delta'+\rho),$$ by \cite[Corollary 1]{CKP}.

The last sentence follows because the hypothesis on the Kodaira dimension actually implies that $L$ is nef and abundant: in fact in general $\nu(X,L)\geq \kappa(X,L)$ (see \cite[Proposition 2.2]{Kawamata}), but if $\nu(X,L)=n$, then $L^n > 0$, i.e., $L$ is big. 
\end{proof}
Notice that the hypothesis $\kappa(X,L) \geq n-1$ is necessary. In fact, for every $n \geq 2$, we can find examples of smooth varieties of dimension $n$ and line bundles of Kodaira dimension $n-2$ for which Theorem \ref{CKP}, with $\D=0$, does not hold:
\begin{example} 
We will first of all construct an example for $n=2$.

Let $C$ be a smooth elliptic curve and let $\eta \in \mathrm{Pic}^0(C)$ be a non-torsion divisor on $C$.
Let $\mathcal{E}:=\mathcal{O}_C \oplus  \mathcal{O}_C(-\eta)$.
Take $X:=\mathbb{P}(\mathcal{E})$ and let $\pi: X \rightarrow C$ be the related projection.
As in \cite[Notation V.2.8.1]{Hartshorne} let $C_0$ be a section $\sigma_0: C \rightarrow X$. Set $\rho:=-\pi^*(\eta)$ and $L:= - (K_X+\rho)$.


By \cite[Lemma V.2.10]{Hartshorne}, $K_X \sim -2C_0+\rho$ and $L \sim 2C_0-2\rho$,
so that $L$ is nef because $C_0^2=0$ and $\rho\equiv 0$. 

Hence, for any $m \geq 1$, $H^0(X, mL)=H^0(X, 2mC_0-2m\rho)$. By projection formula
\begin{eqnarray*}
H^0(X, mL)&=&H^0(C, S^{2m}(\mathcal{E}) \otimes 2m\eta)\\
&=&H^0(C, (\mathcal{O}_C \oplus \mathcal{O}_C(-\eta) \oplus \cdots \oplus \mathcal{O}_C(-2m\eta))\otimes 2m\eta) = \mathbb{C}.
\end{eqnarray*} Therefore $\kappa(X,L)=0$.

Moreover $K_X+L+\rho=0$, hence $\kappa(X,K_X+L+\rho) = 0$.
On the contrary, $\kappa(X,K_X+L) = - \infty$, because, for any $m \geq 1$, $H^0(X,-m\rho)=H^0(C,m\eta)=0$.

We can now produce examples in every dimension, building them up inductively. Let $X$ be a smooth projective variety of dimension $n$, $L$ a nef line bundle on $X$ with $\kappa(X,L)=n-2$ and $\rho$ a numerically trivial divisor on $X$ such that $\kappa(X,K_X+L+\rho) \geq 0$ but $\kappa(X,K_X+L)=-\infty$. Again, let $C$ be a smooth elliptic curve. Take  $X \times C$. Call $\pi_1$ the first projection and $\pi_2$ the second one.
Fix any point $q$ on $C$ and define $\rho':=\pi_1^*(\rho)$, $L':=\pi_1^*(L) + \pi_2^*(q)$. It is clear that $L'$ is nef and that $K_{X \times C} = \pi_1^*(K_X)$. Hence, by Kunneth's formula, it is easy to see that, for $m$ sufficiently large and divisible, $$H^0(X \times C, m(K_{X \times C} +L' + \rho' ))\not = 0,$$ while, for every $m \geq 1$, $$H^0(X \times C,  m(K_{X \times C} +L'))=0.$$
Since $$\kappa(X\times C, L')= \lim_{m \rightarrow + \infty} \frac{\log(h^0(X \times C,mL'))}{\log(m)},$$ where the limit is taken over sufficiently divisible $m$'s (see for example \cite[Corollary 2.1.38]{LazI}), then it is straightforward to see that $\kappa(X \times C, L') = (n+1)-2$.
\end{example}

\bibliographystyle{plain}  
\bibliography{luoghibase}

\end{document}